
\documentclass[12pt,reqno]{amsart}%
\usepackage{amsmath, amsfonts, amssymb, amsthm, amscd, amsbsy}
\usepackage{fancyhdr}
\usepackage[usenames,dvipsnames,svgnames,x11names,hyperref]{xcolor}
\usepackage{geometry}
\usepackage{graphicx}
\usepackage{hyperref}
\usepackage{amsmath}
\usepackage{amsfonts}
\usepackage{amssymb}
\usepackage{lineno}%
\setcounter{MaxMatrixCols}{30}
%TCIDATA{OutputFilter=latex2.dll}
%TCIDATA{Version=5.50.0.2953}
%TCIDATA{CSTFile=amsartci.cst}
%TCIDATA{Created=Monday, October 17, 2016 19:49:40}
%TCIDATA{LastRevised=Monday, May 05, 2025 13:08:19}
%TCIDATA{<META NAME="GraphicsSave" CONTENT="32">}
%TCIDATA{<META NAME="SaveForMode" CONTENT="1">}
%TCIDATA{BibliographyScheme=Manual}
%TCIDATA{<META NAME="DocumentShell" CONTENT="Articles\SW\AMS Journal Article">}
%BeginMSIPreambleData
\providecommand{\U}[1]{\protect\rule{.1in}{.1in}}
%EndMSIPreambleData
%\linenumbers
\hypersetup{colorlinks,breaklinks,
	linkcolor={Fuchsia},
	citecolor={ForestGreen},
	urlcolor={NavyBlue}}
\geometry{
	a4paper,
	total={8.5in,11.5in},
	left=1in,
	right=1in,
	top=1.5in,
	bottom=1.5in,
}
\newtheorem{theorem}{Theorem}[section]
\theoremstyle{plain}

\newtheorem{lemma}{Lemma}[section]

\numberwithin{equation}{section}
\allowdisplaybreaks
\newtheorem{theorema}{Theorem}[section]

\begin{document}
\title[Stability of second order HUP]{Sharp stability of the Heisenberg Uncertainty Principle: Second-Order and
Curl-Free Field Cases}
\author{Anh Xuan Do}
\address{Anh Xuan Do: Department of Mathematics, University of Connecticut, Storrs, CT
06269, USA}
\email{anh.do@uconn.edu}
\author{Nguyen Lam}
\address{Nguyen Lam: School of Science and the Environment, Grenfell Campus, Memorial
University of Newfoundland, Corner Brook, NL A2H5G4, Canada}
\email{nlam@mun.ca}
\author{Guozhen Lu}
\address{Guozhen Lu: Department of Mathematics, University of Connecticut, Storrs, CT
06269, USA}
\email{guozhen.lu@uconn.edu}

%\thanks{}
\begin{abstract}
Using techniques from harmonic analysis, we derive several sharp stability
estimates for the second order Heisenberg Uncertainty Principle. We also
present the explicit lower and upper bounds for the sharp stability constants
and compute their exact limits when the dimension $N\rightarrow\infty$. Our
proofs rely on spherical harmonics decomposition and Fourier analysis,
differing significantly from existing approaches in the literature. Our
results substantially improve the stability constants of the second order
Heisenberg Uncertainty Principle recently obtained in \cite{DN25}. As direct
consequences of our main results, we also establish the sharp stability, with exact
asymptotic behavior of the stability constants, of the Heisenberg Uncertainty
Principle with curl-free vector fields and a sharp version of the second order
Poincar\'{e} type inequality with Gaussian measure.

\end{abstract}
\maketitle

\section{Introduction}

Ever since Brezis and Lieb raised a question about the stability of the
Sobolev inequality in \cite{BL85}, the studies of quantitative stability for
functional and geometric inequalities have attracted great attention. Here the
stability of functional and geometric inequalities refers to the study of how
much a function or a geometric object deviates from being optimal when the
inequality is "almost" satisfied. Mathematically, suppose that we have the
inequality $A\left(  u\right)  \geq B\left(  u\right)  $ for all $u$ belonging
to a suitable space, and that $A\left(  u\right)  =B\left(  u\right)  $ if and
only if $u\in M$, the set of all optimizers, then we would like to find a
constant $c_{S}>0$ such that the following stability inequality holds:
$\delta\left(  u\right)  \geq c_{S}d\left(  u,M\right)  $. Here $\delta\left(
u\right)  :=A\left(  u\right)  -B\left(  u\right)  $ is the (nonnegative)
deficit function, and $d\left(  u,M\right)  $ is a suitable distance from $u$
to the set $M$. The literature on the topic is extremely vast and therefore,
we just refer the interested reader to \cite{BDNS20, CF13, CFW13, Chr14,
Chr17, Cian09, CL13, DT16, FJ1, FJ2, FMP10, FN19, FZ22, FMP08, Kon25}, to name
just a few.

It is worthy to note that the stability constants $c_{S}$ and whether or not
the stability inequalities can be attained have not been investigated in the
literature until very recently. For instance, the question of Brezis and Lieb
was answered affirmatively by Bianchi and Egnell in \cite{BE91} that
\[
\int_{\mathbb{R}^{N}}\left\vert \nabla u\right\vert ^{2}dx-S_{N}\left(
\int_{\mathbb{R}^{N}}|u|^{\frac{2N}{N-2}}dx\right)  ^{\frac{N-2}{N}}\geq
c_{S}\inf_{U\in E_{Sob}}\int_{\mathbb{R}^{N}}\left\vert \nabla\left(
u-U\right)  \right\vert ^{2}dx
\]
for some stability constant $c_{S}>0$. Here $S_{N}$ is the sharp Sobolev
constant and $E_{Sob}$ is the manifold of the optimizers of the Sobolev
inequality. However, the precise information on the stability constant $c_{S}$
was totally missing in the literature until the very recent results in
\cite{DEFFL}. Indeed, in \cite{DEFFL}, Dolbeault, Esteban, Figalli, Frank and
Loss pioneered the study of the sharp stability Sobolev constant $c_{S}$ and
provided some optimal lower bounds for $c_{S}$ when the dimension
$N\rightarrow\infty$, and established the stability for Gaussian log-Sobolev
inequality as an application. Also, K\"{o}nig proved in \cite{Kon23} that the
optimal lower bound is strictly smaller than the spectral gap constant
$\frac{4}{N+4}$, and derived in \cite{Kon22} its attainability. In a recent work \cite{CLT24}, Chen, Tang and the third
author investigated explicit lower bounds for the stability of the
Hardy-Littlewood-Sobolev (HLS) inequalities. Their analysis further yielded
explicit stability estimates for the classical Sobolev inequalities, as well
as their higher-order and fractional counterparts. Additionally, in
\cite{CLT242, CLT243}, they established optimal asymptotic lower bounds for
the HLS and higher/fractional Sobolev inequalities in the regimes when
$N\rightarrow\infty$ for $0<s<N/2$ and when $s\rightarrow0$ for all $N$. The
latter case, where $s\rightarrow0$, enabled them to derive a global stability
result for the log-Sobolev inequality on the sphere, originally proved by
Beckner \cite{Beckner93}, thereby sharpening the earlier local stability
obtained by Chen et al. in \cite{CLT23}. More recently, Chen et al.~\cite{CLTW} established the optimal stability result for the Sobolev inequality on the Heisenberg group. Since both the P\'olya--Szeg\"o inequality and the Riesz rearrangement inequality fail in this setting, the gradient flow and integral flow methods used in \cite{DEFFL, CLT24, CLT242, CLT243} are no longer applicable. Consequently, the authors of \cite{CLTW} introduced a new approach based on the CR Yamabe flow to bridge local stability and global stability. It is also
worth noting that, in general, studying the stability of higher-order and fractional-order
functional and geometric inequalities can be more complex, and thus is usually less understood, due to several challenges and obstacles when dealing with these
situations such as the nonlocality, the lack of the symmetrization argument,
etc.

The main objective of this article is the stability of the well-known
Heisenberg Uncertainty Principle (HUP). It is worth mentioning that the HUP
plays a fundamental role in quantum mechanics and mathematical physics. For
instance, HUP has been used, both implicitly and explicitly, to provide
essential inequalities for proving the stability of matter (see the book of
Lieb and Seiringer \cite{LS10}), the stability of relativistic matter (Lieb
and Loss \cite{LL02}, Lieb and Yau \cite{LY88}), bounds in spectral theory
(Reed and Simon \cite{RS75}), and constraints in harmonic analysis (Fefferman
\cite{Fef83}). In mathematical terms, it can be stated as follows: For $u\in
S_{0}$
\begin{equation}
\left(  \int_{\mathbb{R}^{N}}\left\vert \nabla u\right\vert ^{2}dx\right)
\left(  \int_{\mathbb{R}^{N}}\left\vert x\right\vert ^{2}\left\vert
u\right\vert ^{2}dx\right)  \geq\frac{N^{2}}{4}\left(  \int_{\mathbb{R}^{N}%
}\left\vert u\right\vert ^{2}dx\right)  ^{2}. \label{HUP}%
\end{equation}
Here $S_{0}$ is the completion of $C_{0}^{\infty}\left(  \mathbb{R}%
^{N}\right)  $ under the norm $\left(  \int_{\mathbb{R}^{N}}\left\vert \nabla
u\right\vert ^{2}dx\right)  ^{\frac{1}{2}}+\left(  \int_{\mathbb{R}^{N}%
}\left\vert x\right\vert ^{2}\left\vert u\right\vert ^{2}dx\right)  ^{\frac
{1}{2}}$. The constant $\frac{N^{2}}{4}$ is optimal and can be attained by the
Gaussian signal $\alpha e^{-\beta\left\vert x\right\vert ^{2}}$, $\alpha
\in\mathbb{R}$, $\beta>0$.

In the same spirit of Brezis and Lieb's question, one could also ask whether
or not the HUP (\ref{HUP}) is stable. More clearly, is it true that
$\delta\left(  u\right)  \approx0$ implies $u\approx\alpha e^{-\beta\left\vert
x\right\vert ^{2}}$ in some sense for some $\alpha\in\mathbb{R}$, $\beta>0$,
and for some Heisenberg deficit $\delta$? This question was first addressed by
McCurdy and Venkatraman in \cite{MV21}. More precisely, the authors in
\cite{MV21} applied the concentration-compactness arguments and proved that
there exist universal constants $C_{1}>0$ and $C_{2}(N)>0$ such that
\[
\theta_{2}(u)\geq C_{1}\left(  \int_{\mathbb{R}^{N}}\left\vert u\right\vert
^{2}dx\right)  d_{1}^{2}(u,E_{HUP})+C_{2}(N)d_{1}^{4}(u,E_{HUP}),
\]
for all $u\in S_{0}$. Here
\[
\theta_{2}\left(  u\right)  :=\left(  \int_{\mathbb{R}^{N}}|\nabla
u|^{2}dx\right)  \left(  \int_{\mathbb{R}^{N}}\left\vert x\right\vert
^{2}\left\vert u\right\vert ^{2}dx\right)  -\dfrac{N^{2}}{4}\left(
\int_{\mathbb{R}^{N}}\left\vert u\right\vert ^{2}dx\right)  ^{2}%
\]
is a HUP deficit, $E_{HUP}:=\left\{  \alpha e^{-\beta\left\vert x\right\vert
^{2}}:\alpha\in\mathbb{R},\beta>0\right\}  $ is the set of all the optimizers
of (\ref{HUP}), and $d_{1}(u,A):=\inf_{v\in A}\left\{  \left\Vert
u-v\right\Vert _{2}\right\}  $ is the distance from $u$ to the set $A$.
Therefore, the results in \cite{MV21} imply that if $\theta_{2}\left(
u\right)  \approx0$, then $u\approx\alpha e^{-\beta\left\vert x\right\vert
^{2}}$ in $L^{2}\left(  \mathbb{R}^{N}\right)  $ for some $\alpha\in
\mathbb{R}$, $\beta>0$. However, because of the approach in \cite{MV21}, no
information on the constants $C_{1}$ and $C_{2}(N)$ were provided.

Shortly after the results in \cite{MV21}, a simple and constructive proof was
provided by Fathi in \cite{F21} to show that $C_{1}=\frac{1}{4}$ and
$C_{2}=\frac{1}{16}$. However, these constants are far from being sharp.
Eventually, the best constants $C_{1}$ and $C_{2}$ of the stability of the HUP
were established recently in \cite{CFLL24}. The strategy in \cite{CFLL24} is
as follows: Firstly, the authors changed the deficit function and worked on
the following one:
\[
\theta_{1}\left(  u\right)  :=\left(  \int_{\mathbb{R}^{N}}\left\vert \nabla
u\right\vert ^{2}dx\right)  ^{\frac{1}{2}}\left(  \int_{\mathbb{R}^{N}%
}\left\vert u\right\vert ^{2}\left\vert x\right\vert ^{2}dx\right)  ^{\frac
{1}{2}}-\frac{N}{2}\int_{\mathbb{R}^{N}}\left\vert u\right\vert ^{2}dx
\]
which turns out to be more suitable in studying the stability of HUP.
Secondly, they proved the following HUP identity that provides simple and
direct understandings to the HUP as well as its optimizers: For $u\in S_{0}$,
$u\neq0$ and $\lambda=\left(  \frac{\int_{\mathbb{R}^{N}}\left\vert
u\right\vert ^{2}\left\vert x\right\vert ^{2}dx}{\int_{\mathbb{R}^{N}}|\nabla
u|^{2}dx}\right)  ^{\frac{1}{4}}$, one has
\begin{equation}
\theta_{1}\left(  u\right)  =\frac{\lambda^{2}}{2}\int_{\mathbb{R}^{N}%
}\left\vert \nabla\left(  ue^{\frac{\left\vert x\right\vert ^{2}}{2\lambda
^{2}}}\right)  \right\vert ^{2}e^{-\frac{\left\vert x\right\vert ^{2}}%
{\lambda^{2}}}dx. \label{HUPI}%
\end{equation}
Obviously, it can also deduced from (\ref{HUPI}) that all optimizers for
(\ref{HUP}) are the classical Gaussian profiles. Finally, the authors in
\cite{CFLL24} combined the HUP identity (\ref{HUPI}) and the following optimal
Poincar\'{e} inequality with Gaussian type measure: for all $\lambda\neq0$,
\[
\int_{\mathbb{R}^{N}}|\nabla u|^{2}e^{-\frac{1}{2\left\vert \lambda\right\vert
^{2}}\left\vert x\right\vert ^{2}}dx\geq\frac{1}{\left\vert \lambda\right\vert
^{2}}\inf_{c}\int_{\mathbb{R}^{N}}\left\vert u-c\right\vert ^{2}e^{-\frac
{1}{2\left\vert \lambda\right\vert ^{2}}\left\vert x\right\vert ^{2}}dx\text{,
}%
\]
to show the following sharp stability of HUP:

\begin{theorema}
\label{A}For all $u\in S_{0}:$%
\begin{equation}
\left(  \int_{\mathbb{R}^{N}}\left\vert \nabla u\right\vert ^{2}dx\right)
^{\frac{1}{2}}\left(  \int_{\mathbb{R}^{N}}\left\vert u\right\vert
^{2}\left\vert x\right\vert ^{2}dx\right)  ^{\frac{1}{2}}-\frac{N}{2}%
\int_{\mathbb{R}^{N}}\left\vert u\right\vert ^{2}dx\geq d_{1}^{2}(u,E_{HUP}).
\label{SHUP}%
\end{equation}
Moreover, the inequality is sharp and the equality can be attained by
nontrivial functions $u\notin E_{HUP}$.
\end{theorema}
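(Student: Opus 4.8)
The plan is to fuse the two ingredients recalled above — the HUP identity \eqref{HUPI} and the optimal Gaussian Poincar\'{e} inequality — into a single chain of (in)equalities, and then to exhibit an explicit extremizer to settle sharpness and attainability. Throughout I may assume $u \neq 0$, since both sides of \eqref{SHUP} vanish when $u=0$ (indeed $0 \in E_{HUP}$, taking $\alpha = 0$).

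First I would fix $u \neq 0$, set $\lambda = \left(\int_{\mathbb{R}^N}|u|^2|x|^2\,dx \big/ \int_{\mathbb{R}^N}|\nabla u|^2\,dx\right)^{1/4}$ as in \eqref{HUPI}, and introduce the substituted function $v := u\,e^{|x|^2/(2\lambda^2)}$, so that \eqref{HUPI} reads $\theta_1(u) = \tfrac{\lambda^2}{2}\int_{\mathbb{R}^N}|\nabla v|^2 e^{-|x|^2/\lambda^2}\,dx$, which is finite. The point is to apply the Poincar\'{e} inequality to $v$ with the weight parameter chosen so that the two Gaussian densities coincide: taking $|\mu|^2 = \lambda^2/2$ gives $e^{-|x|^2/(2|\mu|^2)} = e^{-|x|^2/\lambda^2}$, and the Poincar\'{e} inequality then yields $\int_{\mathbb{R}^N} |\nabla v|^2 e^{-|x|^2/\lambda^2}\,dx \geq \tfrac{2}{\lambda^2}\inf_c\int_{\mathbb{R}^N} |v-c|^2 e^{-|x|^2/\lambda^2}\,dx$. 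Substituting into the identity cancels the factor $\lambda^2/2$ and leaves $\theta_1(u) \geq \inf_c \int_{\mathbb{R}^N} |v-c|^2 e^{-|x|^2/\lambda^2}\,dx$.

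Next I would undo the substitution inside the last integral. Since $|v-c|^2 e^{-|x|^2/\lambda^2} = |u - c\,e^{-|x|^2/(2\lambda^2)}|^2$ pointwise, the right-hand side equals $\inf_c \int_{\mathbb{R}^N} |u - c\,e^{-|x|^2/(2\lambda^2)}|^2\,dx$. Each competitor $c\,e^{-|x|^2/(2\lambda^2)}$ belongs to $E_{HUP}$ (with $\alpha = c$ and $\beta = 1/(2\lambda^2)$), so this infimum over the single parameter $c$ is bounded below by the infimum over the whole family $E_{HUP}$, that is, by $d_1^2(u,E_{HUP})$. This gives exactly \eqref{SHUP}.

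For the sharpness and attainability, I would test the chain on $u_\ast(x) = x_1\,e^{-|x|^2/(2\lambda^2)}$ for an arbitrary $\lambda > 0$; this function is odd in $x_1$, hence lies outside the family $E_{HUP}$ of (even) Gaussians. Three equality conditions must hold simultaneously. First, the substituted function is $v = x_1$, an affine function, which is precisely an eigenfunction realizing the spectral gap in the Gaussian Poincar\'{e} inequality, so that step is an equality. Second, by the oddness of $u_\ast$, every radial Gaussian $e^{-\beta|x|^2}$ is $L^2$-orthogonal to $u_\ast$, so both $\inf_c\int_{\mathbb{R}^N}|u_\ast - c\,e^{-|x|^2/(2\lambda^2)}|^2\,dx$ and $d_1^2(u_\ast,E_{HUP})$ are attained at the zero competitor and equal $\|u_\ast\|_2^2$; hence the passage to $d_1^2$ is also an equality. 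Third, the value $\lambda$ produced by the defining formula must reproduce the parameter used to build $u_\ast$, which reduces to the identity $\int_{\mathbb{R}^N}|u_\ast|^2|x|^2\,dx = \lambda^4 \int_{\mathbb{R}^N}|\nabla u_\ast|^2\,dx$, a short evaluation of Gaussian moments. The self-consistency of $\lambda$ in the third condition, together with confirming in the second that the optimal Gaussian parameter in the distance really is $\beta = 1/(2\lambda^2)$, is the main point requiring care; once these hold, $\theta_1(u_\ast) = d_1^2(u_\ast,E_{HUP}) = \|u_\ast\|_2^2 > 0$, so equality is attained by a nontrivial $u_\ast \notin E_{HUP}$ and the constant $1$ in \eqref{SHUP} cannot be enlarged.
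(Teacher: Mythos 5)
Your proof is correct, but it takes a genuinely different route from the one written in this paper: you reconstruct the original argument of \cite{CFLL24}, fusing the HUP identity \eqref{HUPI} with the sharp Gaussian Poincar\'e inequality (with the weight parameter matched so that the two Gaussian densities coincide), whereas the paper deliberately avoids \eqref{HUPI} and gives a new proof via spherical harmonics. Concretely, the paper works with the additive deficit $\theta_{3}(u)=\int_{\mathbb{R}^{N}}|\nabla u|^{2}dx+\int_{\mathbb{R}^{N}}|x|^{2}|u|^{2}dx-N\int_{\mathbb{R}^{N}}|u|^{2}dx$, proves a symmetrization principle showing the stability inequality only needs to be checked mode by mode, handles the nonradial modes $k\geq1$ by the sharp first-order HUP in dimension $N+2k$ (which yields the constant $2k\geq2$ on those modes), uses the Gaussian Poincar\'e inequality only for the radial mode, and finally recovers the multiplicative deficit $\theta_{1}$ by an explicit scaling argument; in your approach that scaling is built in from the start through the choice of $\lambda$ in \eqref{HUPI}. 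Each route buys something: yours is shorter and self-contained given the identity \eqref{HUPI}, and, unlike the paper's written proof, it also settles the sharpness and attainability clause with the explicit extremizer $u_{*}(x)=x_{1}e^{-|x|^{2}/2}$ --- your three equality checks are all valid, since $\int_{\mathbb{R}^{N}}|u_{*}|^{2}|x|^{2}dx=\int_{\mathbb{R}^{N}}|\nabla u_{*}|^{2}dx=\frac{N+2}{4}\pi^{N/2}$ (so the defining formula returns $\lambda=1$), $v=x_{1}$ saturates the Gaussian Poincar\'e inequality, and oddness makes both infima equal $\|u_{*}\|_{2}^{2}$, attained at the zero competitor. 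The paper's proof, on the other hand, is designed to showcase the decomposition-plus-dimension-shift technique that powers the second-order results (Theorems \ref{T1}, \ref{T2}, \ref{T2.1}), where no analogue of the identity \eqref{HUPI} paired with a usable Poincar\'e inequality is available.
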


As a consequence, we can deduce from the above Theorem \ref{A} that%
\[
\theta_{2}(u)\geq N\left(  \int_{\mathbb{R}^{N}}\left\vert u\right\vert
^{2}dx\right)  d_{1}^{2}(u,E_{HUP})+d_{1}^{4}(u,E_{HUP})
\]
and the inequality is sharp with optimal constants $C_{1}=N$ and $C_{2}=1$,
and can be attained by nontrivial functions $u\notin E_{HUP}$.

Since the inequality (\ref{SHUP}) in Theorem \ref{A} can be attained by
nontrivial optimizers, we can once again ask for its stability. More
explicitly, is it possible to set up some stability version for (\ref{SHUP})?
This question was investigated very recently in \cite{LLR25}. The approach in
\cite{LLR25} is very similar to the method in \cite{CFLL24}. Indeed, the
authors in \cite{LLR25} combined the HUP identity (\ref{HUPI}) with certain
improved version of the Poincar\'{e} inequality with Gaussian type measure.
More precisely, in order to get the stability of (\ref{SHUP}), the authors in
\cite{LLR25} proved the following improved version of the sharp Gaussian
Poincar\'{e} inequality:

\begin{theorema}
\label{B}For all $\lambda\neq0$%
\[
\int_{\mathbb{R}^{N}}|\nabla u|^{2}e^{-\frac{1}{2\left\vert \lambda\right\vert
^{2}}\left\vert x\right\vert ^{2}}dx\geq\frac{1}{\left\vert \lambda\right\vert
^{2}}\inf_{c,\mathbf{d}}\int_{\mathbb{R}^{N}}\left(  \left\vert u-c\right\vert
^{2}+\left\vert u-c-\mathbf{d}\cdot x\right\vert ^{2}\right)  e^{-\frac
{1}{2\left\vert \lambda\right\vert ^{2}}\left\vert x\right\vert ^{2}%
}dx\text{.}%
\]
Also, when $\lambda=1$, then
\begin{align*}
&  \int_{\mathbb{R}^{N}}\left\vert \nabla u\right\vert ^{2}\frac{e^{-\frac
{1}{2}\left\vert x\right\vert ^{2}}}{\left(  2\pi\right)  ^{\frac{N}{2}}%
}dx-\int_{\mathbb{R}^{N}}\left\vert u-\int_{\mathbb{R}^{N}}u\frac{e^{-\frac
{1}{2}\left\vert x\right\vert ^{2}}}{\left(  2\pi\right)  ^{\frac{N}{2}}%
}dx\right\vert ^{2}\frac{e^{-\frac{1}{2}\left\vert x\right\vert ^{2}}}{\left(
2\pi\right)  ^{\frac{N}{2}}}dx\\
&  \geq\frac{1}{2}\int_{\mathbb{R}^{N}}\left\vert \nabla u-\int_{\mathbb{R}%
^{N}}ux\frac{e^{-\frac{1}{2}\left\vert x\right\vert ^{2}}}{\left(
2\pi\right)  ^{\frac{N}{2}}}dx\right\vert ^{2}\frac{e^{-\frac{1}{2}\left\vert
x\right\vert ^{2}}}{\left(  2\pi\right)  ^{\frac{N}{2}}}dx\\
&  \geq\int_{\mathbb{R}^{N}}\left\vert u-\int_{\mathbb{R}^{N}}u\frac
{e^{-\frac{1}{2}\left\vert x\right\vert ^{2}}}{\left(  2\pi\right)  ^{\frac
{N}{2}}}dx-\left(  \int_{\mathbb{R}^{N}}ux\frac{e^{-\frac{1}{2}\left\vert
x\right\vert ^{2}}}{\left(  2\pi\right)  ^{\frac{N}{2}}}dx\right)  \cdot
x\right\vert ^{2}\frac{e^{-\frac{1}{2}\left\vert x\right\vert ^{2}}}{\left(
2\pi\right)  ^{\frac{N}{2}}}dx.
\end{align*}

\end{theorema}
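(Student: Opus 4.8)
The plan is to pass everything to the probabilist's Gaussian measure $d\gamma:=(2\pi)^{-N/2}e^{-|x|^2/2}\,dx$ and to diagonalize against the Hermite basis, i.e.\ the eigenfunctions of the Ornstein--Uhlenbeck operator $\mathcal{L}:=\ou$. First I would remove the dependence on $\lambda$ in the first inequality by scaling: writing $x=|\lambda|y$ and $v(y):=u(|\lambda|y)$, the weight $e^{-|x|^2/(2|\lambda|^2)}$ becomes $e^{-|y|^2/2}$, while $|\nabla_x u|^2=|\lambda|^{-2}|\nabla_y v|^2$ and $\mathbf{d}\cdot x=(|\lambda|\mathbf{d})\cdot y$, so that after relabelling $\mathbf{d}$ both sides acquire the same factor $|\lambda|^{N-2}$. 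Hence the general case is equivalent to $\lambda=1$, and it suffices to prove every assertion for $d\gamma$.

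Next I would set up the spectral decomposition. Recalling the integration-by-parts identity $\int\nabla u\cdot\nabla v\,d\gamma=-\int(\mathcal{L}u)v\,d\gamma$ and that $-\mathcal{L}$ admits the Hermite polynomials as a complete orthogonal system with $-\mathcal{L}H_{\alpha}=|\alpha|H_{\alpha}$, I group by total degree to obtain the orthogonal splitting $L^{2}(\gamma)=\bigoplus_{k\ge0}\mathcal{H}_{k}$, where $\mathcal{H}_{k}$ is the eigenspace of $-\mathcal{L}$ for the eigenvalue $k$. Writing $u_{k}$ for the orthogonal projection of $u$ onto $\mathcal{H}_{k}$ and $\|\cdot\|$ for the $L^{2}(\gamma)$-norm, Parseval and the identity above give the two master formulas
\[
\int_{\mathbb{R}^{N}}|u|^{2}\,d\gamma=\sum_{k\ge0}\|u_{k}\|^{2},\qquad
\int_{\mathbb{R}^{N}}|\nabla u|^{2}\,d\gamma=\sum_{k\ge0}k\,\|u_{k}\|^{2}.
\]
Here $u_{0}=\int u\,d\gamma=c$ is the mean, and since $\int x_{i}x_{j}\,d\gamma=\delta_{ij}$ the degree-one part is $u_{1}=\mathbf{m}\cdot x$ with $\mathbf{m}=\int ux\,d\gamma$ and $\|u_{1}\|^{2}=|\mathbf{m}|^{2}$. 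The one structural fact I would isolate as a lemma is that $\nabla$ lowers degree by one, $\partial_{i}H_{\alpha}=\alpha_{i}H_{\alpha-e_{i}}$, so that with $w:=\sum_{k\ge2}u_{k}$ one has $\nabla w=\nabla u-\mathbf{m}$ and therefore, by the second master formula applied to $w$,
\[
\int_{\mathbb{R}^{N}}|\nabla u-\mathbf{m}|^{2}\,d\gamma=\int_{\mathbb{R}^{N}}|\nabla w|^{2}\,d\gamma=\sum_{k\ge2}k\,\|u_{k}\|^{2}.
\]

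With these formulas every statement becomes an elementary scalar inequality in the coefficients $\|u_{k}\|^{2}$. For the first inequality I would carry out the minimization in $(c,\mathbf{d})$ explicitly: expanding $\int|u-c|^{2}\,d\gamma+\int|u-c-\mathbf{d}\cdot x|^{2}\,d\gamma$ in the basis yields $2|u_{0}-c|^{2}+|\mathbf{m}-\mathbf{d}|^{2}+\|u_{1}\|^{2}+2\sum_{k\ge2}\|u_{k}\|^{2}$, minimized at $c=u_{0}$, $\mathbf{d}=\mathbf{m}$ with value $\|u_{1}\|^{2}+2\sum_{k\ge2}\|u_{k}\|^{2}$; comparing with $\int|\nabla u|^{2}\,d\gamma=\|u_{1}\|^{2}+\sum_{k\ge2}k\|u_{k}\|^{2}$ reduces the claim to $\sum_{k\ge2}(k-2)\|u_{k}\|^{2}\ge0$. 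For the chain at $\lambda=1$, the left-hand deficit is $\sum_{k\ge2}(k-1)\|u_{k}\|^{2}$, the middle term is $\tfrac{1}{2}\sum_{k\ge2}k\|u_{k}\|^{2}$, and the right-hand term is $\sum_{k\ge2}\|u_{k}\|^{2}$, so the two inequalities are exactly $2(k-1)\ge k$ and $\tfrac{1}{2}k\ge1$ for $k\ge2$, both holding with equality precisely on $\mathcal{H}_{2}$. This also identifies the extremals: equality throughout forces $u_{k}=0$ for all $k\ge3$, i.e.\ $u$ is a Hermite polynomial of degree at most two.

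These computations are routine; the genuine work is the functional-analytic bookkeeping that legitimizes them. The main obstacle I anticipate is justifying the two master formulas and the convergence of the Hermite expansion for a general admissible $u$ rather than a polynomial: one must fix the correct form domain (finiteness of $\int|\nabla u|^{2}\,d\gamma$ together with $u\in L^{2}(\gamma)$), verify the integration-by-parts identity there by a truncation/approximation argument, and confirm that $\nabla$ acts on the expansion term-by-term so that the formula for $\int|\nabla u-\mathbf{m}|^{2}\,d\gamma$ is valid. Once this domain issue is settled, the result follows by assembling the scalar inequalities above.
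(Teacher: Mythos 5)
Your computations are correct: in the Hermite/Ornstein--Uhlenbeck basis the deficit $\int|\nabla u|^{2}d\gamma-\sum_{k}\,$(quadratic terms) reduces exactly to the scalar inequalities $k-2\ge 0$, $2(k-1)\ge k$, and $\tfrac{k}{2}\ge 1$ for $k\ge 2$, and your scaling reduction to $\lambda=1$ is sound; the domain issue you flag is genuinely routine, since polynomials are dense in the Gaussian Sobolev space $\{u\in L^{2}(\gamma):\nabla u\in L^{2}(\gamma)\}$ and the form $\int\nabla u\cdot\nabla v\,d\gamma=-\int(\mathcal{L}u)v\,d\gamma$ diagonalizes on it. One point of orientation, though: this paper does not prove Theorem \ref{B} at all --- it is quoted as background from \cite{LLR25} --- so there is no in-paper proof to match against. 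Your spectral route (grouping Hermite eigenfunctions by total degree of $-\mathcal{L}$) is precisely the ``standard'' Gaussian-measure approach that the present paper attributes to \cite{DN25} and deliberately moves away from: the authors' own machinery for the analogous statements they do prove (the new proof of Theorem \ref{A}, and Theorems \ref{T1}--\ref{T2}) is spherical harmonics decomposition combined with the dimension-shifting Fourier identity of Lemma \ref{T3.1}, which converts each angular mode into a radial problem in a higher-dimensional space. The trade-off is clear: your Hermite expansion is shorter for this first-order Gaussian Poincar\'e statement and yields the equality cases ($u$ a polynomial of degree at most two) for free, whereas the spherical-harmonics route is the one that scales to the second-order deficit $\delta_{2}$, where (as the paper notes about \cite{DN25}) the corresponding Hermite computation for $\left\Vert \nabla^{2}v-x\otimes\nabla v\right\Vert _{HS}^{2}$ becomes long and intricate.
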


Then, with the same approach as in \cite{CFLL24}, the authors proved in
\cite{LLR25} the following stability of (\ref{SHUP}):

\begin{theorema}
\label{C}For all $u\in S_{0}:$%
\[
\theta_{1}\left(  u\right)  \geq d_{2}^{2}(u,F)
\]
and%
\[
\theta_{1}\left(  u\right)  -d_{1}^{2}(u,E_{HUP})\geq d_{1}^{2}(u,F).
\]

\end{theorema}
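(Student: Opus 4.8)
The plan is to follow the two‑ingredient strategy of \cite{CFLL24} used to establish Theorem \ref{A}, but to replace the elementary Gaussian Poincar\'{e} inequality by its refinement in Theorem \ref{B}. Fix $u \in S_0$, $u \neq 0$, and let $\lambda$ be as in \eqref{HUPI}. Setting $v := u\,e^{|x|^{2}/(2\lambda^{2})}$, the HUP identity \eqref{HUPI} reads $\theta_{1}(u) = \tfrac{\lambda^{2}}{2}\int_{\mathbb{R}^{N}}|\nabla v|^{2}e^{-|x|^{2}/\lambda^{2}}\,dx$, so that the problem is reduced to a lower bound for the Gaussian‑weighted Dirichlet energy of $v$.

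First I would apply Theorem \ref{B} to $v$, choosing its weight parameter so that $\tfrac{1}{2|\lambda|^{2}}$ there matches the exponent $1/\lambda^{2}$ above; concretely this means using the Gaussian variance $\lambda^{2}/2$, whereupon the prefactor $\lambda^{2}/2$ coming from \eqref{HUPI} cancels exactly against the factor $2/\lambda^{2}$ produced by the Poincar\'{e} estimate. This yields
\[ \theta_{1}(u)\ \geq\ \inf_{c,\mathbf{d}}\int_{\mathbb{R}^{N}}\left(|v-c|^{2}+|v-c-\mathbf{d}\cdot x|^{2}\right)e^{-|x|^{2}/\lambda^{2}}\,dx. \]
The essential bookkeeping is the conjugation identity: since $v=u\,e^{|x|^{2}/(2\lambda^{2})}$, for every $\phi$ one has $\int_{\mathbb{R}^{N}}|v-\phi|^{2}e^{-|x|^{2}/\lambda^{2}}\,dx=\int_{\mathbb{R}^{N}}\bigl|u-\phi\,e^{-|x|^{2}/(2\lambda^{2})}\bigr|^{2}\,dx$. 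Applying this with $\phi=c$ and $\phi=c+\mathbf{d}\cdot x$, and writing $G:=e^{-|x|^{2}/(2\lambda^{2})}$, gives
\[ \theta_{1}(u)\ \geq\ \inf_{c,\mathbf{d}}\Bigl(\|u-cG\|_{2}^{2}+\|u-(c+\mathbf{d}\cdot x)G\|_{2}^{2}\Bigr). \]
Since $cG\in E_{HUP}$ and $(c+\mathbf{d}\cdot x)G\in F$, the right‑hand side dominates the two‑term distance $d_2^{2}(u,F)$, which is the first assertion $\theta_{1}(u)\geq d_2^{2}(u,F)$.

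For the second inequality I would decouple the two summands. For each fixed $(c,\mathbf{d})$ the first term dominates $\inf_{c',\beta'}\|u-c'e^{-\beta'|x|^{2}}\|_{2}^{2}=d_{1}^{2}(u,E_{HUP})$ and the second dominates $d_{1}^{2}(u,F)$, because enlarging the set of admissible parameters (in particular freeing the scale $\beta$) only decreases an infimum. As both lower bounds are independent of $(c,\mathbf{d})$, taking the infimum on the left preserves the inequality and produces $\theta_{1}(u)\geq d_{1}^{2}(u,E_{HUP})+d_{1}^{2}(u,F)$, equivalently $\theta_{1}(u)-d_{1}^{2}(u,E_{HUP})\geq d_{1}^{2}(u,F)$.

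Granting Theorem \ref{B}, the argument above is essentially bookkeeping, and the genuine difficulty has already been absorbed into Theorem \ref{B} itself: its proof must diagonalize the Ornstein--Uhlenbeck operator via the Hermite/spherical‑harmonic decomposition and control not merely the spectral gap (which suffices for the plain Poincar\'{e} inequality behind Theorem \ref{A}) but also the projection onto the second eigenspace, spanned by the linear functions, that accounts for the extra term $|v-c-\mathbf{d}\cdot x|^{2}$. Within the present reduction the only points demanding care are the exact matching of the two Gaussian scales between \eqref{HUPI} and Theorem \ref{B}, and the observation that the fixed‑scale projection onto $E_{HUP}$ and $F$ is dominated by the free‑scale distances $d_{1}(u,\cdot)$; both become transparent once the conjugation $v=u\,e^{|x|^{2}/(2\lambda^{2})}$ is in force.
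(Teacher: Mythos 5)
Your proposal is correct and follows exactly the route the paper attributes to \cite{LLR25} for this theorem: conjugate via the HUP identity \eqref{HUPI}, apply the refined Gaussian Poincar\'{e} inequality of Theorem \ref{B} with the matched variance $\lambda^{2}/2$, and undo the conjugation to pass from weighted norms of $v-\phi$ to unweighted norms of $u-\phi e^{-|x|^{2}/(2\lambda^{2})}$, with the fixed-scale infima dominating $d_{2}^{2}(u,F)$ and $d_{1}^{2}(u,E_{HUP})+d_{1}^{2}(u,F)$ respectively. The scale matching, the cancellation of the prefactors, and the term-by-term domination are all handled correctly, so this is the paper's own argument carried out in full.
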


Here
\[
F:=\left\{  \left(  c+\mathbf{d}\cdot x\right)  e^{-\beta\left\vert
x\right\vert ^{2}}:c\in\mathbb{R}\text{, }\mathbf{d}\in\mathbb{R}^{N}\text{,
}\beta>0\right\}
\]
is the set of all the optimizers of (\ref{SHUP}), and
\[
d_{2}(u,F):=\inf_{c,\mathbf{d},\beta>0}\left(  \int_{\mathbb{R}^{N}}\left\vert
u-ce^{-\beta\left\vert x\right\vert ^{2}}\right\vert ^{2}+\left\vert u-\left(
c+\mathbf{d}\cdot x\right)  e^{-\beta\left\vert x\right\vert ^{2}}\right\vert
^{2}dx\right)  ^{\frac{1}{2}}.
\]

\medskip

The problem of finding the best constant of HUP when one replaces $u$ in
\eqref{HUP} by a divergence-free vector field $\mathbf{U}$ was posed by Maz'ya
in \cite[Section 3.9]{M}. Indeed, motivated by questions in hydrodynamics,
Maz'ya raised the following open question in \cite[Section 3.9]{M}: determine
the best constant $\mu^{\ast}(N)$ in the following inequality
\begin{equation}
\int_{\mathbb{R}^{N}}|\nabla\mathbf{U}|^{2}dx\int_{\mathbb{R}^{N}}\left\vert
x\right\vert ^{2}\left\vert \mathbf{U}\right\vert ^{2}dx\geq\mu^{\ast
}(N)\left(  \int_{\mathbb{R}^{N}}\left\vert \mathbf{U}\right\vert
^{2}dx\right)  ^{2},\quad\forall\mathbf{U}\in\left(  C_{0}^{\infty}%
(\mathbb{R}^{N})\right)  ^{N},\quad\mathrm{\operatorname{div}}\mathbf{U}=0.
\label{HUP_div}%
\end{equation}
In \cite{CFL22}, the authors established several results, including the proof
that $\mu^{\star}(2)=4$ and therefore resolving Maz'ya question for the case
$N=2$. The method in \cite{CFL22} relies on the fact that in $\mathbb{R}^{2}$,
the divergence-free vector fields are isometrically isomorphic to the
curl-free vector fields. More precisely, any divergence-free vector
$\mathbf{U}$ can be written in the form $\mathbf{U}=(-u_{x_{2}},u_{x_{1}})$
where $u$ is a scalar field. Also, if $\mathbf{U}\in\left(  C_{0}^{\infty
}(\mathbb{R}^{2})\right)  ^{2}$ then $u\in C_{0}^{\infty}(\mathbb{R}^{2})$.
Therefore, applying integration by parts yields%

\[
\int_{\mathbb{R}^{2}}|\nabla\mathbf{U}|^{2}dx=\int_{\mathbb{R}^{2}}|\Delta
u|^{2}dx.
\]
Hence \eqref{HUP_div} is equivalent to the following second order HUP-type
inequality in $\mathbb{R}^{2}$:%

\begin{equation}
\int_{\mathbb{R}^{2}}|\Delta u|^{2}dx\int_{\mathbb{R}^{2}}\left\vert
x\right\vert ^{2}|\nabla u|^{2}dx\geq\mu^{\star}(2)\left(  \int_{\mathbb{R}%
^{2}}|\nabla u|^{2}dx\right)  ^{2}. \label{HUP_div_scalar}%
\end{equation}
Then, by using spherical harmonic decomposition, the authors showed in
\cite{CFL22} that $\mu^{\star}(2)=4$ is sharp in (\ref{HUP_div_scalar}) and is
achieved by the Gaussian profiles of the form $\displaystyle u(x)=\alpha
e^{-\beta\left\vert x\right\vert ^{2}}$, $\beta>0$. Equivalently, $\mu^{\star
}(2)=4$ is sharp in (\ref{HUP_div}) and is attained by the vector fields of
the form $\displaystyle\mathbf{U}\left(  x\right)  =\left(  -\alpha
e^{-\beta\left\vert x\right\vert ^{2}}x_{2},\alpha e^{-\beta\left\vert
x\right\vert ^{2}}x_{1}\right)  $, $\beta>0$, $\alpha\in\mathbb{R}$. Very
recently, Hamamoto answered Maz'ya's open question in the remaining case
$N\geq3$ in \cite{Ham21}. More precisely, Hamamoto applied the
poloidal-toroidal decomposition to prove that $\mu^{\ast}(N)=\frac{1}%
{4}\left(  \sqrt{N^{2}-4\left(  N-3\right)  }+2\right)  ^{2}$ when $N\geq3$.

It is worthy to mention that we also proved in \cite{CFL22} the following
sharp HUP for curl-free vector fields: for $\mathbf{U}\in\left(  C_{0}%
^{\infty}(\mathbb{R}^{N})\right)  ^{N}$, $\mathrm{\operatorname{curl}%
}\mathbf{U}=0$, there holds
\begin{equation}
\int_{\mathbb{R}^{N}}|\nabla\mathbf{U}|^{2}dx\int_{\mathbb{R}^{N}}\left\vert
x\right\vert ^{2}\left\vert \mathbf{U}\right\vert ^{2}dx\geq\left(  \frac
{N+2}{2}\right)  ^{2}\left(  \int_{\mathbb{R}^{N}}\left\vert \mathbf{U}%
\right\vert ^{2}dx\right)  ^{2}, \label{HUP_curl}%
\end{equation}
by improving the best constant $\frac{N^{2}}{4}$ which corresponds to scalar
fields in \eqref{HUP}. Indeed, since in this case we can write $\mathbf{U}%
=\nabla u$ for some scalar potential $u:\mathbb{R}^{N}\mapsto\mathbb{C}$,
(\ref{HUP_curl}) is equivalent to the following second order CKN inequality:
\begin{equation}
\int_{\mathbb{R}^{N}}|\Delta u|^{2}dx\int_{\mathbb{R}^{N}}\left\vert
x\right\vert ^{2}|\nabla u|^{2}dx\geq\frac{(N+2)^{2}}{4}\left(  \int
_{\mathbb{R}^{N}}|\nabla u|^{2}dx\right)  ^{2}. \label{HUP_curl_scalar}%
\end{equation}
Then, by using spherical harmonics decomposition, we proved in \cite{CFL22}
that the constant $\displaystyle\frac{(N+2)^{2}}{4}$ is optimal in
\eqref{HUP_curl_scalar} and is attained for Gaussian profiles of the form
$\displaystyle u(x)=\alpha e^{-\beta\left\vert x\right\vert ^{2}}$, $\alpha
\in\mathbb{R},\beta>0$. Therefore, the constant $\displaystyle\left(
\frac{N+2}{2}\right)  ^{2}$ is sharp in (\ref{HUP_curl}) and is achieved by
the vector fields $\displaystyle\mathbf{U}\left(  x\right)  =\alpha
e^{-\beta\left\vert x\right\vert ^{2}}x$, $\alpha\in\mathbb{R},\beta>0$.
Simpler proofs of (\ref{HUP_curl_scalar}) was recently provided in
\cite{CFL23, DN23}. Moreover, in \cite{CFL23}, the authors provided a simple
proof for an improvement of (\ref{HUP_curl_scalar}), and used it to show that
the Gaussian profiles $\alpha e^{-\beta\left\vert x\right\vert ^{2}}$,
$\alpha\in\mathbb{R}$, $\beta>0$, are all the optimizers of
(\ref{HUP_curl_scalar}).

Now, let $E_{SHUP}:=\left\{  \alpha e^{-\beta\left\vert x\right\vert ^{2}%
}:\alpha\in\mathbb{R}\text{, }\beta>0\right\}  $ be the set of all optimizers
for (\ref{HUP_curl_scalar}). It is again natural to ask about the stabilities
of the second order HUP (\ref{HUP_curl_scalar}) and the HUP with curl-free
fields (\ref{HUP_curl}). This problem was studied very recently in
\cite{DN25}. More clearly, the authors in \cite{DN25} proved the following
stability of the second order HUP (\ref{HUP_curl_scalar}):%
\begin{equation}
\delta_{1}(u)\geq\frac{1}{768}\inf\left\{  \left\Vert \nabla\left(  u-u^{\ast
}\right)  \right\Vert _{2}^{2}:u^{\ast}\in E_{SHUP}\text{ and }\left\Vert
\nabla u\right\Vert _{2}^{2}=\left\Vert \nabla u^{\ast}\right\Vert _{2}%
^{2}\right\}  . \label{HD}%
\end{equation}
Here
\[
\delta_{1}(u)=\left(  \int_{\mathbb{R}^{N}}|\Delta u|^{2}dx\right)
^{1/2}\left(  \int_{\mathbb{R}^{N}}\left\vert x\right\vert ^{2}|\nabla
u|^{2}dx\right)  ^{1/2}-\dfrac{N+2}{2}\int_{\mathbb{R}^{N}}|\nabla u|^{2}dx
\]
is the second order HUP deficit. The approach in \cite{DN25} is rather similar
to the one in \cite{CFLL24}. However, because it involves second-order terms,
the analysis becomes significantly more intricate. More precisely, first, the
authors in \cite{DN25} set up an improvement of (\ref{HUP_curl_scalar}) for
functions that are orthogonal to radial functions and use it to establish an
upper bound for $\int_{\mathbb{R}^{N}}|\nabla u_{o}|^{2}dx$ where $u_{o}$ is
the odd part of $u$. This implies that when the deficit $\delta_{1}(u)$ is
small, then the function is almost even. Now, to estimate the stability for
even functions, the authors use the following identity%
\[
\int_{\mathbb{R}^{N}}|\Delta u|^{2}dx+\int_{\mathbb{R}^{N}}\left\vert
x\right\vert ^{2}|\nabla u|^{2}dx-\left(  N+2\right)  \int_{\mathbb{R}^{N}%
}|\nabla u|^{2}dx=\int_{\mathbb{R}^{N}}\left\Vert \nabla^{2}v-x\otimes\nabla
v\right\Vert _{HS}^{2}e^{-\left\vert x\right\vert ^{2}}dx
\]
where $v=ue^{\frac{\left\vert x\right\vert ^{2}}{2}\text{ }}$. Here
$\nabla^{2}v$ is the Hessian matrix of $v$, $x\otimes\nabla v$ denotes the
matrix $\left(  x_{i}\partial_{j}v\right)  _{i,j}$ and $\left\Vert
A\right\Vert _{HS}$ is the Hilbert-Schmidt norm of the matrix $A$. However,
there is no Poincar\'{e} inequality with Gaussian type measure for the
(second-order) term $\left\Vert \nabla^{2}v-x\otimes\nabla v\right\Vert
_{HS}^{2}$. In order to get a Poincar\'{e} type inequality for this term, the
main approach in \cite{DN25} is spectral analysis of the Ornstein-Uhlenbeck
type operator associated with the Gaussian weight and Hermite polynomials,
which is standard because of the Gaussian measure $e^{-\left\vert x\right\vert
^{2}}dx$. However, this process is very long and very complicated since the
term $\left\Vert \nabla^{2}v-x\otimes\nabla v\right\Vert _{HS}^{2}$ is rather
unusal. We also note that the stability constant of (\ref{HD}) in \cite{DN25}
is $\frac{1}{768}$ which is far from being optimal. Indeed, in our Theorem
\ref{T2.1}, we show that $\liminf$ of the sharp stability constants of
(\ref{HD}) is at least $\frac{1}{2}$ as $N\rightarrow\infty$.

The primary goal of this article is to present a completely different and much
simpler approach to the stability of the second order HUP and the HUP with
curl-free fields. Our method relies on tools in harmonic analysis such as
Fourier transform of radial functions and spherical harmonics decomposition.
It is worth mentioning that spherical harmonics represent one of the most
elegant and powerful mathematical tools in analysis. Their applications extend
across both pure and applied mathematics, with far-reaching implications for
numerous disciplines. In particular, spherical harmonics decomposition has
been used both implicitly and explicitly in establishing sharp constants in
various functional inequalities as well as in the study of isoperimetric
inequalities and related geometric problems. See \cite{Beckner93, CL90, FL12,
FLS08, Lieb83, LL01, LSSY02}, to name just a few. Moreover, spherical
harmonics have been frequently applied to investigate stability, rigidity, and
quantitative versions of geometric and functional inequalities. See
\cite{BDV15, CFMP09, FL121, Fug89, Mag12}, for instance. We also refer the
interested reader to the excellent books \cite{Stein70, Stein93} in which
Stein provided new perspectives on spherical harmonics, established deep
connections between spherical harmonic decompositions and complex analysis,
and developed methods that extended classical techniques for spherical
harmonics to more general settings.

Let%
\[
C\left(  N,k\right)  =\inf_{u\text{ is radial}}\frac{\int_{\mathbb{R}^{N+2k}%
}|\Delta u|^{2}dx+\int_{\mathbb{R}^{N+2k}}\left[  |\nabla u|^{2}\left\vert
x\right\vert ^{2}-2k|u|^{2}\right]  dx}{\int_{\mathbb{R}^{N+2k}}|\nabla
u|^{2}dx}-\left(  N+2\right)
\]
and
\[
C\left(  N\right)  =\inf_{k\in\mathbb{N}}C\left(  N,k\right)  .
\]
Let
\[
K\left(  N\right)  =\inf_{k\in\mathbb{N}}\left(  \sqrt{\left(  N+2k\right)
^{2}-8k}-N\right)  =\dfrac{4N-4}{\sqrt{N^{2}+4N-4}+N}.
\]
Note that $K\left(  N\right)  \rightarrow2$ as $N\rightarrow\infty$.

Now, let $X$ be the completion of $C_{0}^{\infty}(\mathbb{R}^{N})$ under the
seminorm $\left(  \int_{\mathbb{R}^{N}}|\Delta u|^{2}dx+\int_{\mathbb{R}^{N}%
}\left\vert x\right\vert ^{2}|\nabla u|^{2}dx\right)  ^{\frac{1}{2}}$. Our
first main result of this article is the following stability version of
(\ref{HUP_curl_scalar}):

\begin{theorem}
\label{T1}For all $u\in X$, we have
\begin{equation}
\delta_{1}(u)\geq\dfrac{C\left(  N\right)  }{2}\inf_{u^{\ast}\in E_{SHUP}%
}\left\Vert \nabla\left(  u-u^{\ast}\right)  \right\Vert _{2}^{2}.
\label{T1.1}%
\end{equation}
Moreover, the constant $\dfrac{C\left(  N\right)  }{2}$ is sharp.
\end{theorem}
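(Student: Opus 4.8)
The plan is to reduce \eqref{T1.1} to a family of radial one–dimensional problems through spherical harmonics, so that the constants $C(N,k)$ enter by their very definition, and then to isolate the radial mode as the one genuinely delicate case. \emph{Normalization.} Both sides of \eqref{T1.1} are homogeneous of degree two in $u$, and under the dilation $u\mapsto u(\lambda\,\cdot)$ both $\delta_{1}(u)$ and $\inf_{u^{\ast}\in E_{SHUP}}\|\nabla(u-u^{\ast})\|_{2}^{2}$ scale by $\lambda^{2-N}$, while $E_{SHUP}$ is invariant. I would therefore rescale $u$ so that $\int_{\mathbb{R}^{N}}|\Delta u|^{2}\,dx=\int_{\mathbb{R}^{N}}|x|^{2}|\nabla u|^{2}\,dx$; at this balanced scale the square root linearizes and $\delta_{1}(u)=\tfrac12\big(\int_{\mathbb{R}^{N}}|\Delta u|^{2}\,dx+\int_{\mathbb{R}^{N}}|x|^{2}|\nabla u|^{2}\,dx-(N+2)\int_{\mathbb{R}^{N}}|\nabla u|^{2}\,dx\big)$.

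\emph{Reduction to radial problems.} Write $u=\sum_{k\ge0}\sum_{m}f_{k,m}(r)Y_{k,m}(\omega)$. Orthogonality of the $Y_{k,m}$ and of their surface gradients on $S^{N-1}$ makes each of the three integrals above split into a sum over $(k,m)$. Setting $f_{k,m}(r)=r^{k}g_{k,m}(r)$ and viewing $g_{k,m}$ as a radial function on $\mathbb{R}^{N+2k}$, the identity $\Delta_{\mathbb{R}^{N}}(r^{k}g\,Y_{k})=r^{k}(\Delta_{\mathbb{R}^{N+2k}}g)\,Y_{k}$ together with one integration by parts gives, for a common positive constant $c$,
\[
\begin{aligned}
\int_{\mathbb{R}^{N}}|\Delta u_{k,m}|^{2}\,dx &= c\int_{\mathbb{R}^{N+2k}}|\Delta g_{k,m}|^{2}\,dy,\\
\int_{\mathbb{R}^{N}}|x|^{2}|\nabla u_{k,m}|^{2}\,dx &= c\int_{\mathbb{R}^{N+2k}}\big[|y|^{2}|\nabla g_{k,m}|^{2}-2k|g_{k,m}|^{2}\big]\,dy,\\
\int_{\mathbb{R}^{N}}|\nabla u_{k,m}|^{2}\,dx &= c\int_{\mathbb{R}^{N+2k}}|\nabla g_{k,m}|^{2}\,dy.
\end{aligned}
\]
These are exactly the three integrals in the definition of $C(N,k)$. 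Writing $A_{k},B_{k},D_{k}$ for the degree-$k$ contributions to the three global integrals, that definition gives for every $k\ge1$ the mode estimate $A_{k}+B_{k}-(N+2)D_{k}\ge C(N,k)\,D_{k}\ge C(N)\,D_{k}$. Since every element of $E_{SHUP}$ is radial, the distance splits as well, $\inf_{u^{\ast}}\|\nabla(u-u^{\ast})\|_{2}^{2}=\inf_{\alpha,\beta>0}\|\nabla(u_{0}-\alpha e^{-\beta|x|^{2}})\|_{2}^{2}+\sum_{k\ge1}D_{k}$, where $u_{0}$ is the radial part, and the tails $\sum_{k\ge1}D_{k}$ match the nonradial contribution on both sides.

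\emph{The radial mode (main obstacle).} Balanced Gaussians already saturate the second-order HUP among radial functions, so $C(N,0)=0$ and the crude mode bound is vacuous for $k=0$; the real content is the radial stability estimate $A_{0}+B_{0}-(N+2)D_{0}\ge C(N)\inf_{\alpha,\beta>0}\|\nabla(u_{0}-\alpha e^{-\beta|x|^{2}})\|_{2}^{2}$. Writing $u_{0}=w(r)e^{-|x|^{2}/2}$, the balanced identity reduces the left side to $\int_{\mathbb{R}^{N}}\big[(w''-rw')^{2}+(N-1)r^{-2}(w')^{2}\big]e^{-|x|^{2}}\,dx$, a quadratic form whose only null direction is $w\equiv\text{const}$ (the Gaussian itself). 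Minimizing the right side over the amplitude $\alpha$ and the scale $\beta$ removes, to leading order, the two–dimensional tangent space of $E_{SHUP}$, and what must be shown is that on the transverse part the deficit form dominates the $\dot H^{1}$–distance form with constant at least $C(N)$. I would expand $w$ in the Laguerre (radial Hermite) basis attached to the weight $r^{N-1}e^{-r^{2}}\,dr$, in which the deficit form is diagonal, and bound its spectrum below relative to the distance form; establishing this comparison — that the radial stability constant is no smaller than $C(N)=\inf_{k\ge1}C(N,k)$ — is the technical heart of the proof and the step I expect to be hardest. Granting it, summing the mode estimates for $k\ge1$ with the radial estimate and recombining the two pieces of the distance yields \eqref{T1.1}.

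\emph{Sharpness.} For each fixed $k\ge1$ take $g$ to be a minimizer (or near-minimizer) of the Rayleigh quotient defining $C(N,k)$. Because that quotient is scale-critical, such a $g$ is automatically balanced, so $u=g(r)Y_{k}$ satisfies $\delta_{1}(u)=\tfrac12 C(N,k)D_{k}$, while $\inf_{u^{\ast}}\|\nabla(u-u^{\ast})\|_{2}^{2}=D_{k}$ since a single nonradial mode is $\dot H^{1}$–orthogonal to every radial Gaussian. The ratio is therefore $\tfrac12 C(N,k)$, and letting $k$ run through a sequence along which $C(N,k)\to C(N)$ shows that the constant $\tfrac{C(N)}{2}$ in \eqref{T1.1} cannot be increased.
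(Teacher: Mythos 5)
Your overall architecture — normalize to the balanced scale so that $\delta_{1}=\tfrac12\delta_{2}$, decompose into spherical harmonics, handle the modes $k\geq 1$ directly from the definition of $C(N,k)$, split the $\dot H^{1}$-distance by mode orthogonality, and test with single nonradial modes for sharpness — coincides with the paper's strategy (the paper runs the scaling step in the opposite order, proving the $\delta_{2}$-inequality first and then rescaling, but this is the same maneuver). However, your proof has a genuine gap at exactly the point you flag as the "technical heart": the radial-mode estimate
\[
A_{0}+B_{0}-(N+2)D_{0}\;\geq\;C(N)\,\inf_{\alpha,\,\beta>0}\bigl\Vert \nabla\bigl(u_{0}-\alpha e^{-\beta|x|^{2}}\bigr)\bigr\Vert_{2}^{2}
\]
is asserted, a route toward it is sketched (diagonalize the deficit form in the Laguerre basis for the weight $r^{N-1}e^{-r^{2}}\,dr$ and compare spectra), and then the conclusion is drawn "granting it." That sketched route is essentially the Ornstein--Uhlenbeck/Hermite spectral analysis of the earlier work of Duong--Nguyen, which this paper explicitly set out to avoid because the quadratic form $\int \Vert\nabla^{2}w-x\otimes\nabla w\Vert_{HS}^{2}e^{-|x|^{2}}dx$ is nonstandard and the analysis is long and delicate; you have not verified that the form is actually diagonal in that basis, nor produced the spectral lower bound. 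So as written, the central inequality of the theorem rests on an unproven claim.

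The paper closes precisely this gap with a different, much shorter device: the Fourier-transform identity for radial functions (Lemma \ref{T3.1}). Setting $v(|x|)=v_{0}'(|x|)/|x|$, the three radial second-order integrals on $\mathbb{R}^{N}$ become the first-order integrals $\int_{\mathbb{R}^{N+2}}|\nabla v|^{2}$, $\int_{\mathbb{R}^{N+2}}|x|^{2}|v|^{2}$, $\int_{\mathbb{R}^{N+2}}|v|^{2}$, and the projection onto the Gaussian is preserved; hence the radial mode of the second-order problem on $\mathbb{R}^{N}$ is \emph{equivalent} to the first-order HUP stability problem on $\mathbb{R}^{N+2}$, for which the sharp constant $2$ is already known (Theorem \ref{A}, reproved in Section 2 by the same harmonics method). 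Since $C(N)\leq C(N,1)\leq \tfrac{2N}{N+2}<2$, the radial mode is then never the binding constraint, and the theorem follows. If you replace your "granting it" step by this reduction — or simply invoke Lemma \ref{T3.1} together with Theorem \ref{A} — your argument becomes complete; your sharpness discussion (rebalancing near-minimizers of the $C(N,k)$-quotient by dilation, and noting that a single nonradial mode is $\dot H^{1}$-orthogonal to all radial Gaussians) is correct and in fact spells out a detail the paper treats tersely.
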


We also have the following estimates on the stability constants:

\begin{theorem}
\label{L1}We have $C\left(  N,0\right)  =0$ and
\[
\sqrt{\left(  N+2k\right)  ^{2}-8k}-N\leq C\left(  N,k\right)  \leq2k\text{
}\forall k\geq1.
\]
As a consequence, we obtain
\[
K(N) \leq C\left(  N\right) = C\left(  N,1\right) \leq \frac{2N}{N+2}
\]
and
\[
\lim_{N\rightarrow\infty}C\left(  N\right)  =2\text{.}%
\]

\end{theorem}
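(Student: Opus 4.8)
The plan is to collapse the definition of $C(N,k)$ to a one-dimensional variational problem via the Fourier transform of radial functions, and then to read off the sharp constants from an explicitly solvable Schr\"odinger operator. Write $M=N+2k$ and let $\hat u(\xi)=F(|\xi|)$ be the (radial) Fourier transform of $u$. By Plancherel, $\int|\Delta u|^2=\int|\xi|^4|\hat u|^2$, $\int|\nabla u|^2=\int|\xi|^2|\hat u|^2$ and $\int|u|^2=\int|\hat u|^2$. The one nonobvious term is the weighted one: using $\widehat{x_\ell\partial_j u}=-\partial_{\xi_\ell}(\xi_j\hat u)$ together with the integration by parts $\int\xi\cdot\nabla_\xi|\hat u|^2=-M\int|\hat u|^2$, which cancels the cross terms, one obtains the self-dual identity $\int|x|^2|\nabla u|^2\,dx=\int|\xi|^2|\nabla_\xi\hat u|^2\,d\xi$. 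In polar coordinates these identities turn $C(N,k)+(N+2)=\inf_F \mathcal N/\mathcal D$ with
\[
\mathcal N=\int_0^\infty F'^2 s^{M+1}\,ds+\int_0^\infty F^2 s^{M+3}\,ds-2k\int_0^\infty F^2 s^{M-1}\,ds,\qquad \mathcal D=\int_0^\infty F^2 s^{M+1}\,ds,
\]
the key gain being that only one of the three terms in $\mathcal N$ carries a derivative of $F$.

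For the upper bounds I would test with the Gaussian $F(s)=e^{-s^2/2}$. Every integral is a Gamma value via $\int_0^\infty s^{a}e^{-s^2}\,ds=\tfrac12\Gamma(\tfrac{a+1}{2})$ and $\Gamma(z+1)=z\Gamma(z)$; the $\int F^2 s^{M+3}$ and $\int F'^2 s^{M+1}$ contributions turn out equal (reflecting equality in the underlying second-order HUP), and a short computation gives $C(N,k)\le 2k\cdot\frac{N+2k-2}{N+2k}\le 2k$, hence in particular $C(N,1)\le\frac{2N}{N+2}$. The same test function (the $k=0$ case of the computation) together with the lower bound below yields $C(N,0)=0$, attained by the Gaussian.

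The heart of the matter is the sharp lower bound $C(N,k)\ge\sqrt{(N+2k)^2-8k}-N$, equivalently $\mathcal N\ge(\sqrt{M^2-8k}+2)\,\mathcal D$. The naive route---bounding $\int F'^2 s^{M+1}-2k\int F^2 s^{M-1}$ from below by Hardy's inequality and then pairing $\int F^2 s^{M+3}$ with $\int F^2 s^{M-1}$ by Cauchy--Schwarz and AM--GM---only produces $\mathcal N\ge\sqrt{M^2-8k}\,\mathcal D$ and loses the crucial additive $2$, precisely because Hardy and Cauchy--Schwarz cannot be saturated by the same $F$. To recover the sharp constant I would diagonalize the form exactly: the substitutions $s=e^\tau$ and then $F(e^\tau)=e^{-M\tau/2}H(\tau)$ (the second producing $\int H'^2+\tfrac{M^2}{4}\int H^2$ out of the derivative term) convert the target inequality, with $\beta=\tfrac12\sqrt{M^2-8k}$, into nonnegativity of the Schr\"odinger form $\int_{\mathbb{R}}H'^2+\int_{\mathbb{R}}VH^2$ with
\[
V(\tau)=\beta^2+e^{4\tau}-(2\beta+2)e^{2\tau}=\bigl(e^{2\tau}-(\beta+1)\bigr)^2-(2\beta+1).
\]
Since $V$ is sign-indefinite, positivity is not automatic. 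The decisive step is to exhibit an everywhere-positive zero-energy solution: taking $\psi(\tau)=\exp\!\bigl(\beta\tau-\tfrac12 e^{2\tau}\bigr)$ one verifies directly that $-\psi''+V\psi=0$. The ground-state substitution $H=\psi w$ then gives $\int H'^2+\int VH^2=\int\psi^2 w'^2\ge0$, which is exactly the sharp estimate; the only thing to check is that the boundary terms vanish, which follows by density of smooth compactly supported profiles and the doubly-exponential decay of $\psi$ as $\tau\to+\infty$. I expect the construction of $\psi$ to be the crux of the argument.

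Finally I would assemble the consequences. With $g(k):=\sqrt{(N+2k)^2-8k}-N$, the quantity $(N+2k)^2-8k=N^2+(4N-8)k+4k^2$ has positive derivative in $k$ for $k\ge1$, so $g$ is increasing on $[1,\infty)$ and $\inf_{k\ge1}g(k)=g(1)=K(N)$; hence $C(N)\ge K(N)$. On the other side $C(N,1)\le\frac{2N}{N+2}$, while for $k\ge2$ one compares $g(2)=\sqrt{N^2+8N}-N\ge\frac{2N}{N+2}$ (equivalently $3N+8\ge\sqrt{N^2+8N}$, true upon squaring), so that $C(N,k)\ge g(k)\ge g(2)\ge\frac{2N}{N+2}\ge C(N,1)$. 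Thus the infimum over $k\ge1$ is attained at $k=1$, giving $K(N)\le C(N)=C(N,1)\le\frac{2N}{N+2}$. Since $K(N)=\frac{4N-4}{\sqrt{N^2+4N-4}+N}\to 2$ and $\frac{2N}{N+2}\to 2$ as $N\to\infty$, the squeeze theorem yields $\lim_{N\to\infty}C(N)=2$.
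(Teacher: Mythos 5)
Your proposal is correct, and for the crucial sharp lower bound it takes a genuinely different route from the paper. The paper never leaves physical space: it verifies, for radial $u$ on $\mathbb{R}^{N+2k}$, the sum-of-squares identity in which the deficit with constant $N+2+K$ equals $\int_0^\infty \left\vert u''r+(N+2k-1)u'+u'r^2+\tfrac{2N+2k+K}{2}ur\right\vert^2 r^{N+2k-3}\,dr$ plus a multiple of $\int_0^\infty \vert u\vert^2 r^{N+2k-1}\,dr$ whose coefficient is nonnegative exactly when $\left(\tfrac{2N+2k+K}{2}\right)^2-(N+2k)\tfrac{2N+2k+K}{2}+2k\le 0$, i.e.\ $K\le\sqrt{(N+2k)^2-8k}-N$. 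You instead (i) use the self-dual identity $\int\vert x\vert^2\vert\nabla u\vert^2\,dx=\int\vert\xi\vert^2\vert\nabla_\xi\hat u\vert^2\,d\xi$ so that only one term carries a derivative (note this chain of Plancherel identities, as you wrote them, requires the unitary normalization of the Fourier transform; with the paper's $e^{-2\pi i x\cdot\xi}$ convention uneven powers of $2\pi$ appear, though a harmless rescaling of $F$ removes them and the infimum is unchanged), (ii) pass to logarithmic (Emden--Fowler) coordinates to reduce to a one-dimensional Schr\"odinger form, and (iii) certify nonnegativity by the ground-state representation built on the explicit positive zero-energy solution $\psi(\tau)=\exp\bigl(\beta\tau-\tfrac12 e^{2\tau}\bigr)$, which indeed satisfies $\psi''=V\psi$. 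Both certificates are rigorous; yours is more systematic and explains where the sharp constant comes from (the borderline at which a positive solution of $-\psi''+V\psi=0$ exists), while the paper's is more elementary, needing only one algebraic expansion. Amusingly, your Fourier step is the same device as the paper's Lemma \ref{T3.1} (from \cite{DDLL24}), which the authors deploy in proving Theorem \ref{T2} but not in their proof of this theorem. Two further small differences in your favor: your single Gaussian computation yields the slightly stronger bound $C(N,k)\le 2k\tfrac{N+2k-2}{N+2k}$, which simultaneously gives $C(N,k)\le 2k$ and $C(N,1)\le\tfrac{2N}{N+2}$ (the paper derives these separately, from the sharp second-order HUP of \cite{CFL22} and from a Gaussian test); and your monotonicity argument for $g(k)=\sqrt{(N+2k)^2-8k}-N$ together with the check $g(2)\ge\tfrac{2N}{N+2}$ supplies a proof of the comparison $C(N,1)\le\tfrac{2N}{N+2}\le C(N,k)$ for $k\ge 2$ that the paper asserts without detail. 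The remaining caveats (density of compactly supported profiles, vanishing of the boundary terms $H^2(\beta-e^{2\tau})$ in the ground-state substitution) are standard and you flag them appropriately.
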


We note that in a recent paper \cite{HY25}, by refining the analysis in this paper, the authors showed that $C(N)=C(N,1)=\sqrt{N^2+4N-4}-N$. Moreover, this sharp stability constant can be attained.

Let us briefly describe our approach in proving Theorem \ref{T1}. First, we
will linearize the deficit $\delta_{1}$ and study the stability problem with
the following deficit:
\[
\delta_{2}(u):=\int_{\mathbb{R}^{N}}|\Delta u|^{2}dx+\int_{\mathbb{R}^{N}%
}\left\vert x\right\vert ^{2}|\nabla u|^{2}dx-\left(  N+2\right)
\int_{\mathbb{R}^{N}}|\nabla u|^{2}dx.
\]
Obviously, by the AM-GM inequality%
\[
\delta_{2}(u)\geq2\delta_{1}(u)\geq0.
\]
We note that though $\delta_{2}(u)$ is not scaling-invariant, it is much more
tractable when we apply the spherical harmonic decompositions on $\delta
_{2}(u)$ than one does that on $\delta_{1}(u)$. With this new deficit function
$\delta_{2}(u)$, we will prove that

\begin{theorem}
\label{T2}For all $u\in X$, we have
\[
\delta_{2}(u)\geq C\left(  N\right)  \inf_{c}\int_{\mathbb{R}^{N}}\left\vert
\nabla\left(  u-ce^{-\left\vert x\right\vert ^{2}/2}\right)  \right\vert
^{2}dx.
\]
Moreover, the constant $C\left(  N\right)  $ is sharp.
\end{theorem}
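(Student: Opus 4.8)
The plan is to diagonalize the quadratic functional $\delta_2$ through a spherical harmonics decomposition, which converts the stability estimate into a family of radial problems indexed by the angular frequency. Write
\[
u=\sum_{k\geq 0}\sum_{m}f_{k,m}(r)Y_{k,m}(\omega),\qquad r=|x|,\ \omega=x/r,
\]
where $\{Y_{k,m}\}_m$ is an $L^{2}(S^{N-1})$-orthonormal basis of the degree-$k$ spherical harmonics. Since $\Delta\bigl(f(r)Y_{k,m}\bigr)=(L_kf)Y_{k,m}$ with $L_kf=f''+\tfrac{N-1}{r}f'-\tfrac{k(k+N-2)}{r^{2}}f$ preserves each angular sector, and since $\int_{S^{N-1}}\nabla_{\omega}Y_{k,m}\cdot\overline{\nabla_{\omega}Y_{k',m'}}=k(k+N-2)\,\delta_{kk'}\delta_{mm'}$, each of the three integrals occurring in $\delta_2(u)$ and in $\int_{\mathbb{R}^N}|\nabla u|^{2}\,dx$ splits as an orthogonal sum over the modes. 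Thus it suffices to prove the estimate mode by mode and then reassemble.

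For a single mode with $k\geq 1$ I would set $f_{k,m}=r^{k}g$ and exploit the dimensional shift $L_k\bigl(r^{k}g\bigr)=r^{k}\Delta_{N+2k}g$, where $\Delta_{N+2k}$ is the radial Laplacian in dimension $N+2k$. Substituting this and integrating by parts in the resulting cross terms yields, with $g$ viewed as a radial function on $\mathbb{R}^{N+2k}$ and a fixed constant $c_k>0$,
\[
\int_{\mathbb{R}^N}|\Delta u|^{2}\,dx=c_k\int_{\mathbb{R}^{N+2k}}|\Delta g|^{2}\,dx,\qquad \int_{\mathbb{R}^N}|\nabla u|^{2}\,dx=c_k\int_{\mathbb{R}^{N+2k}}|\nabla g|^{2}\,dx,
\]
\[
\int_{\mathbb{R}^N}|x|^{2}|\nabla u|^{2}\,dx=c_k\int_{\mathbb{R}^{N+2k}}\bigl(|x|^{2}|\nabla g|^{2}-2k|g|^{2}\bigr)\,dx.
\]
Hence $\delta_2(f_{k,m}Y_{k,m})$ equals $c_k$ times the bracket $\int_{\mathbb{R}^{N+2k}}|\Delta g|^{2}+\int_{\mathbb{R}^{N+2k}}(|x|^{2}|\nabla g|^{2}-2k|g|^{2})-(N+2)\int_{\mathbb{R}^{N+2k}}|\nabla g|^{2}$, which by the very definition of $C(N,k)$ is at least $C(N,k)\int_{\mathbb{R}^{N+2k}}|\nabla g|^{2}$. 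Undoing the reduction gives, for every $k\geq 1$, $\delta_2(f_{k,m}Y_{k,m})\geq C(N,k)\int_{\mathbb{R}^N}|\nabla(f_{k,m}Y_{k,m})|^{2}\geq C(N)\int_{\mathbb{R}^N}|\nabla(f_{k,m}Y_{k,m})|^{2}$.

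The crux, and the main obstacle, is the radial mode $k=0$, for which $C(N,0)=0$ carries no information and a genuine spectral gap must be produced. Here I would pass to Fourier space, using Plancherel together with $\int_{\mathbb{R}^N}|\Delta u|^{2}=\int|\xi|^{4}|\hat u|^{2}$, $\int_{\mathbb{R}^N}|\nabla u|^{2}=\int|\xi|^{2}|\hat u|^{2}$, and the identity $\int_{\mathbb{R}^N}|x|^{2}|\nabla u|^{2}=\int|\xi|^{2}|\nabla\hat u|^{2}$ (the last from $\widehat{x_i\partial_j u}=-\partial_{\xi_i}(\xi_j\hat u)$ and one integration by parts in $\xi$). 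For radial $u$ the transform $\psi=\hat u$ is radial, $\psi=\Psi(|\xi|)$, and rewriting the radial integrals in the measure $\rho^{N+1}\,d\rho$ identifies them with integrals over $\mathbb{R}^{M}$, $M=N+2$, giving
\[
\delta_2(u)=c_N\bigl\langle (H_M-M)\Psi,\Psi\bigr\rangle_{L^{2}(\mathbb{R}^{M})},\qquad H_M=-\Delta+|x|^{2},
\]
the harmonic oscillator whose ground state $e^{-|x|^{2}/2}$ has energy $M$. Since a radial eigenfunction of $H_M$ has even polynomial degree, the radial spectrum is $M,M+4,M+8,\dots$, so the radial spectral gap equals $4$; as $\inf_c\int_{\mathbb{R}^N}|\nabla(u-ce^{-|x|^{2}/2})|^{2}$ transforms (Gaussians being Fourier self-dual) into $c_N\|\Psi-\Pi\Psi\|_{L^{2}(\mathbb{R}^{M})}^{2}$, with $\Pi$ the projection onto $e^{-|x|^{2}/2}$, I obtain $\delta_2(u)\geq 4\inf_c\int_{\mathbb{R}^N}|\nabla(u-ce^{-|x|^{2}/2})|^{2}$ for radial $u$. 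Because $4>\tfrac{2N}{N+2}\geq C(N)$ by Theorem \ref{L1}, the radial mode too is controlled with constant $C(N)$.

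It then remains to reassemble. Writing $u_0$ for the radial part, the orthogonal splitting gives $\delta_2(u)=\delta_2(u_0)+\sum_{k\geq 1,m}\delta_2(f_{k,m}Y_{k,m})$, and the two previous steps bound this below by $C(N)\bigl[\inf_c\int_{\mathbb{R}^N}|\nabla(u_0-ce^{-|x|^{2}/2})|^{2}+\int_{\mathbb{R}^N}|\nabla(u-u_0)|^{2}\bigr]$. Since $e^{-|x|^{2}/2}$ is radial, hence Dirichlet-orthogonal to every higher mode, the bracket equals $\inf_c\int_{\mathbb{R}^N}|\nabla(u-ce^{-|x|^{2}/2})|^{2}$, which is the desired inequality. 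For sharpness I would take $u$ to be a pure degree-one mode $x_1h(r)$ with $h$ chosen so that the reduced radial profile is a minimizing sequence for the ratio defining $C(N,1)=C(N)$; then $\delta_2(u)=C(N)\int_{\mathbb{R}^N}|\nabla u|^{2}$, while Dirichlet-orthogonality to the radial Gaussian forces the optimal $c$ to vanish, so that $\inf_c\int_{\mathbb{R}^N}|\nabla(u-ce^{-|x|^{2}/2})|^{2}=\int_{\mathbb{R}^N}|\nabla u|^{2}$ and the ratio tends to $C(N)$.
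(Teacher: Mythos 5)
Your proposal is correct and follows the same overall strategy as the paper's proof of Theorem \ref{T2}: spherical harmonics decomposition, reduction of each mode $k\geq 1$ to the radial problem in dimension $N+2k$ that defines $C(N,k)$, reassembly using the fact that the Gaussian is Dirichlet-coupled only to the radial mode, and sharpness via a pure-mode minimizing sequence. The one genuine difference is the treatment of the radial mode $k=0$: the paper substitutes $v=v_0'/r$ and invokes Lemma \ref{T3.1} to convert the radial second-order deficit on $\mathbb{R}^N$ into the first-order HUP deficit on $\mathbb{R}^{N+2}$, to which Theorem \ref{A} applies with constant $2$; you instead pass directly to the Fourier side, where the radial deficit becomes the quadratic form of $-\Delta+|x|^2-(N+2)$ on $\mathbb{R}^{N+2}$, and use the radial spectral gap $4$ of the harmonic oscillator together with self-duality of the Gaussian (note this requires the symmetric normalization $\hat u(\xi)=(2\pi)^{-N/2}\int u\,e^{-ix\cdot\xi}dx$, under which your three Plancherel identities and $\widehat{e^{-|x|^2/2}}=e^{-|\xi|^2/2}$ all hold; with the paper's normalization the Gaussian widths would not match). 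The two routes are two faces of the same dimensional-shift phenomenon — Lemma \ref{T3.1} is itself a Fourier identity — and yours even yields the better radial constant $4$ in place of $2$, though this is immaterial since both exceed $C(N)\leq \frac{2N}{N+2}<2$. Two minor points: in the sharpness argument you should only claim that the ratio tends to $C(N,1)$ along the minimizing sequence, not the equality $\delta_2(u)=C(N)\int_{\mathbb{R}^N}|\nabla u|^2\,dx$ (attainment of $C(N,1)$ is not known at this stage); and your appeal to $C(N)=C(N,1)$ is legitimate via Theorem \ref{L1}, whereas the paper's sharpness argument avoids it by choosing, for any $\varepsilon>0$, a mode $m$ with $C(N,m)<C(N)+\varepsilon$.
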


Now, to prove the stability of the second order HUP with the deficit
$\delta_{2}$, we will use a completely different approach than the ones in
\cite{CFLL24, DN25}. Indeed, instead of using the term $\int_{\mathbb{R}^{N}%
}\left\Vert \nabla^{2}v-x\otimes\nabla v\right\Vert _{HS}^{2}e^{-\left\vert
x\right\vert ^{2}}dx$ and the Hermite polynomials decomposition of the
Ornstein-Uhlenbeck type operator as in \cite{DN25} or trying to apply the
Poincar\'{e} inequality directly as in \cite{CFLL24}, we will first apply the
standard spherical harmonics decomposition to $\delta_{2}(u)$. The key idea
here is that after performing the spherical harmonics decomposition to write
$\delta_{2}(u)$ as a series of radial terms, we will apply the Fourier
transform, or more exactly, the following identity proved recently in
\cite{DDLL24}:
\[
\dfrac{1}{|\mathbb{S}^{N+1}|}\int_{\mathbb{R}^{N+2}}|\Delta^{\alpha/2}%
v|^{2}dx=\dfrac{1}{|\mathbb{S}^{N-1}|}\int_{\mathbb{R}^{N}}|\Delta
^{\alpha/2+1/2}u|^{2}dx,
\]
where $v(x)=V(\left\vert x\right\vert ):=\frac{U^{\prime}(\left\vert
x\right\vert )}{\left\vert x\right\vert }$, and $u(x):=U(\left\vert
x\right\vert )$, to those radial terms. With the above identity, we can
convert the second order HUP on $\mathbb{R}^{N+2k}$ to the first order HUP on
$\mathbb{R}^{N+2k+2}$. Therefore, once again we can follow the strategy in
\cite{CFLL24} to establish the stability results for $\delta_{2}(u)$. Then, by
performing the scaling arguments, we obtain the stability of
(\ref{HUP_curl_scalar}) with the deficit $\delta_{1}(u)$.

As a consequence of our main results, we obtain the following stability
version of the second order HUP which significantly enhances the results in
\cite{DN25}:

\begin{theorem}
\label{T2.1}For all $u\in X$, we have
\begin{equation}
\delta_{1}(u)\geq\dfrac{C\left(  N\right)  }{4}\inf\left\{  \left\Vert
\nabla\left(  u-u^{\ast}\right)  \right\Vert _{2}^{2}:u^{\ast}\in
E_{SHUP}\text{ and }\left\Vert \nabla u\right\Vert _{2}^{2}=\left\Vert \nabla
u^{\ast}\right\Vert _{2}^{2}\right\}  . \label{T1.2}%
\end{equation}

\end{theorem}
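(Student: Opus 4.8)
The plan is to obtain Theorem \ref{T2.1} directly from Theorem \ref{T1} by comparing the \emph{norm-constrained} distance appearing in \eqref{T1.2} with the \emph{free} distance appearing in \eqref{T1.1}. Introduce
\[
d(u)^{2}:=\inf_{u^{\ast}\in E_{SHUP}}\left\Vert \nabla\left(u-u^{\ast}\right)\right\Vert_{2}^{2},\qquad d_{c}(u)^{2}:=\inf\left\{\left\Vert \nabla\left(u-u^{\ast}\right)\right\Vert_{2}^{2}:u^{\ast}\in E_{SHUP},\ \left\Vert \nabla u\right\Vert_{2}=\left\Vert \nabla u^{\ast}\right\Vert_{2}\right\}.
\]
Since Theorem \ref{T1} already provides $\delta_{1}(u)\geq\frac{C(N)}{2}d(u)^{2}$, it suffices to establish the purely geometric comparison
\[
d_{c}(u)^{2}\leq 2\,d(u)^{2},
\]
after which $\delta_{1}(u)\geq\frac{C(N)}{2}d(u)^{2}\geq\frac{C(N)}{4}d_{c}(u)^{2}$ is exactly the assertion \eqref{T1.2}.

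To prove the comparison I work in the Hilbert space of $L^{2}$ gradient fields and set $v:=\nabla u$, $a:=\left\Vert v\right\Vert_{2}$, assuming $a>0$ (the case $a=0$ being trivial). For each fixed $\beta>0$ write $g_{\beta}:=e^{-\beta\left\vert x\right\vert^{2}}$; because the amplitude $\alpha$ ranges over all of $\mathbb{R}$, the gradients $\{\alpha\nabla g_{\beta}:\alpha\in\mathbb{R}\}$ fill out the \emph{full line} $\ell_{\beta}$ through the origin spanned by $\nabla g_{\beta}$. Let $\theta_{\beta}\in[0,\pi/2]$ be the acute angle between $v$ and $\ell_{\beta}$. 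Projecting $v$ onto $\ell_{\beta}$ shows that the best free competitor in direction $\beta$ contributes
\[
\inf_{\alpha\in\mathbb{R}}\left\Vert v-\alpha\nabla g_{\beta}\right\Vert_{2}^{2}=a^{2}\sin^{2}\theta_{\beta},
\]
whereas the competitors obeying the matching-norm condition $\left\Vert \alpha\nabla g_{\beta}\right\Vert_{2}=a$ are precisely the two antipodal points of norm $a$ on $\ell_{\beta}$, the nearer of which contributes
\[
2a^{2}\left(1-\cos\theta_{\beta}\right).
\]

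The elementary identity $\sin^{2}\theta_{\beta}=(1-\cos\theta_{\beta})(1+\cos\theta_{\beta})$ together with $\cos\theta_{\beta}\geq0$ then gives, for every $\beta>0$,
\[
2a^{2}\left(1-\cos\theta_{\beta}\right)=\tfrac{2}{1+\cos\theta_{\beta}}\,a^{2}\sin^{2}\theta_{\beta}\leq 2\,a^{2}\sin^{2}\theta_{\beta}.
\]
Taking the infimum over $\beta>0$ on both sides yields $d_{c}(u)^{2}\leq 2\,d(u)^{2}$, since $\inf_{\beta}a^{2}\sin^{2}\theta_{\beta}=d(u)^{2}$ and $\inf_{\beta}2a^{2}(1-\cos\theta_{\beta})=d_{c}(u)^{2}$ by the two displays above. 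Combined with Theorem \ref{T1}, this finishes the argument. I expect the only point requiring care to be the per-direction reduction: one must use that the admissible amplitudes form all of $\mathbb{R}$, so that $\ell_{\beta}$ is a symmetric line and the relevant angle may be taken acute ($\cos\theta_{\beta}\geq0$), which is exactly what bounds the factor $\tfrac{2}{1+\cos\theta_{\beta}}$ by $2$; without this sign freedom the comparison constant would degrade. Notably, no compactness or attainment of the infima is needed, as the comparison holds direction-by-direction before passing to the infimum over $\beta$.
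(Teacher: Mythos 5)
Your proposal is correct, and it takes a genuinely different---and considerably shorter---route than the paper. Both proofs start from Theorem \ref{T1}, but the paper passes from the free distance to the norm-constrained one by a case analysis on $\delta_1(u)$ (after normalizing $\|\nabla u\|_2=1$): when $\delta_1(u)<C(N)/2$ it first runs the compactness argument of Lemma \ref{PropOfE} to show the free infimum is \emph{attained} at some $v\in E_{SHUP}$, then rescales $w=\lambda v$ with $\lambda=\|\nabla v\|_2^{-1}$ to satisfy the constraint and compares $\|\nabla(u-w)\|_2^2$ with $\|\nabla(u-v)\|_2^2$ via inner-product and H\"older estimates (the punchline being $(2-\lambda)/\lambda\le 1/\lambda^{2}$, i.e.\ $(\lambda-1)^2\ge 0$); when $\delta_1(u)\ge C(N)/2$ it bounds the constrained infimum by $2$ outright, using the symmetry $u^{*}\mapsto -u^{*}$ of $E_{SHUP}$ and the parallelogram identity. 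Your argument compresses all of this into the single geometric inequality $d_c(u)^2\le 2\,d(u)^2$, proved line-by-line in $\beta$ before taking infima: on each line $\ell_\beta$ the constrained competitors are exactly the two antipodal points of norm $a$, and $2a^2(1-\cos\theta_\beta)=\frac{2}{1+\cos\theta_\beta}\,a^2\sin^2\theta_\beta\le 2a^2\sin^2\theta_\beta$. This exploits the same structural fact as the paper (that $E_{SHUP}$ is closed under multiplication by arbitrary real scalars, so each $\ell_\beta$ is a full symmetric line and $\theta_\beta$ may be taken acute), but it needs no attainment, no compactness, and no case split; in substance the paper's Case 1 is the same factor-$2$ comparison, which it can only perform after securing a minimizer, and that is precisely what forces its smallness hypothesis and hence the second case. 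The one step you should write out explicitly is the identification $d_c(u)^2=\inf_{\beta}2a^2(1-\cos\theta_\beta)$: for fixed $\beta$ the constraint $\|\alpha\nabla g_\beta\|_2=a$ admits exactly $\alpha=\pm a/\|\nabla g_\beta\|_2$, which is what reduces the constrained set to the two antipodal points. With that spelled out, your proof is complete and yields exactly \eqref{T1.2}.
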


It is obvious to see that by Theorem \ref{L1}, the $\lim\inf$ of the sharp
stability constants of (\ref{T1.2}) is at least $\frac{1}{2}$ as
$N\rightarrow\infty$. Therefore, (\ref{T1.2}) substantially improves (\ref{HD}).

As another direct consequence of our main results, we also obtain the
stability of the HUP with curl-free vector field. More precisely, let%
\[
\delta_{3}(\mathbf{U}):=\left(  \int_{\mathbb{R}^{N}}|\nabla\mathbf{U}%
|^{2}dx\right)  ^{\frac{1}{2}}\left(  \int_{\mathbb{R}^{N}}\left\vert
x\right\vert ^{2}\left\vert \mathbf{U}\right\vert ^{2}dx\right)  ^{\frac{1}%
{2}}-\left(  \frac{N+2}{2}\right)  \left(  \int_{\mathbb{R}^{N}}\left\vert
\mathbf{U}\right\vert ^{2}dx\right)
\]
and%
\[
E_{CFHUP}:=\left\{  \alpha e^{-\beta\left\vert x\right\vert ^{2}}x,\text{
}\alpha\in\mathbb{R},\beta>0\right\}  .
\]
Then

\begin{theorem}
\label{T3}For all smooth curl-free field $\mathbf{U}$, i.e.
$\operatorname{curl}\mathbf{U}=\mathbf{0}$, we have
\[
\delta_{3}(\mathbf{U})\geq\dfrac{C\left(  N\right)  }{2}\inf_{\mathbf{U}%
^{\ast}\in E_{CFHUP}}\left\Vert \mathbf{U-U}^{\ast}\right\Vert _{2}^{2}.
\]
The constant $\dfrac{C\left(  N\right)  }{2}$ is sharp.

Also, for all smooth curl-free field $\mathbf{U}$, we have
\[
\delta_{3}(\mathbf{U})\geq\dfrac{C\left(  N\right)  }{4}\inf\left\{
\left\Vert \mathbf{U-U}^{\ast}\right\Vert _{2}^{2}:\mathbf{U}^{\ast}\in
E_{CFHUP}\text{ and }\left\Vert \mathbf{U}\right\Vert _{2}^{2}=\left\Vert
\mathbf{U}^{\ast}\right\Vert _{2}^{2}\right\}  .
\]

\end{theorem}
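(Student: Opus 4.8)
The plan is to deduce Theorem \ref{T3} from the scalar second-order results already established in Theorems \ref{T1} and \ref{T2.1}, via the gradient correspondence between curl-free fields and scalar potentials. First I would introduce the potential: since $\mathbb{R}^{N}$ is simply connected, a smooth field with $\operatorname{curl}\mathbf{U}=\mathbf{0}$ is a gradient, so write $\mathbf{U}=\nabla u$. When $\mathbf{U}\in\left(C_{0}^{\infty}(\mathbb{R}^{N})\right)^{N}$, the relation $\nabla u=\mathbf{U}$ forces $\nabla u$ to vanish outside a compact set; since for $N\geq 2$ the complement of a compact set is connected, $u$ is constant, say $u\equiv c$, near infinity, and replacing $u$ by $u-c$ leaves $\nabla u=\mathbf{U}$ unchanged while producing a genuine element of $C_{0}^{\infty}(\mathbb{R}^{N})\subset X$. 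Thus $u\in X$ and the scalar theorems apply.

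Next I would record the three dictionary identities that convert $\delta_{3}(\mathbf{U})$ into $\delta_{1}(u)$. Pointwise $\left\vert\mathbf{U}\right\vert^{2}=\left\vert\nabla u\right\vert^{2}$, so immediately $\int_{\mathbb{R}^{N}}\left\vert\mathbf{U}\right\vert^{2}dx=\int_{\mathbb{R}^{N}}\left\vert\nabla u\right\vert^{2}dx$ and $\int_{\mathbb{R}^{N}}\left\vert x\right\vert^{2}\left\vert\mathbf{U}\right\vert^{2}dx=\int_{\mathbb{R}^{N}}\left\vert x\right\vert^{2}\left\vert\nabla u\right\vert^{2}dx$. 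The only identity requiring work is $\int_{\mathbb{R}^{N}}\left\vert\nabla\mathbf{U}\right\vert^{2}dx=\int_{\mathbb{R}^{N}}\sum_{i,j}\left\vert\partial_{i}\partial_{j}u\right\vert^{2}dx=\int_{\mathbb{R}^{N}}\left\vert\Delta u\right\vert^{2}dx$, which follows by integrating by parts twice: moving one derivative off each factor reassembles $\sum_{i,j}\int(\partial_{i}\partial_{j}u)^{2}$ into $\int(\Delta u)^{2}$, with all boundary terms vanishing by compact support. Combining these gives $\delta_{3}(\mathbf{U})=\delta_{1}(u)$ exactly.

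Finally I would match the optimizer sets and distances. Since $\nabla\left(\alpha e^{-\beta\left\vert x\right\vert^{2}}\right)=-2\alpha\beta\,e^{-\beta\left\vert x\right\vert^{2}}x$ and the coefficient $-2\alpha\beta$ ranges over all of $\mathbb{R}$ as $\alpha\in\mathbb{R}$, $\beta>0$, the map $u^{\ast}\mapsto\nabla u^{\ast}$ is a bijection from $E_{SHUP}$ onto $E_{CFHUP}$. Hence $\inf_{\mathbf{U}^{\ast}\in E_{CFHUP}}\left\Vert\mathbf{U}-\mathbf{U}^{\ast}\right\Vert_{2}^{2}=\inf_{u^{\ast}\in E_{SHUP}}\left\Vert\nabla(u-u^{\ast})\right\Vert_{2}^{2}$, and the constraint $\left\Vert\mathbf{U}\right\Vert_{2}^{2}=\left\Vert\mathbf{U}^{\ast}\right\Vert_{2}^{2}$ is equivalent to $\left\Vert\nabla u\right\Vert_{2}^{2}=\left\Vert\nabla u^{\ast}\right\Vert_{2}^{2}$. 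Feeding these into Theorem \ref{T1} yields the first inequality with constant $\frac{C(N)}{2}$, and into Theorem \ref{T2.1} yields the second (constrained) inequality with constant $\frac{C(N)}{4}$; sharpness of $\frac{C(N)}{2}$ transfers directly from Theorem \ref{T1}, because the correspondence $\mathbf{U}\leftrightarrow u$ identifies the two deficits and the two distances and so carries near-optimizers to near-optimizers. The main obstacle here is entirely minor and bookkeeping in nature, namely verifying that the potential $u$ lies in $X$ and that the integration-by-parts identity carries no boundary contribution; all of the genuine analytic content already resides in the scalar Theorems \ref{T1} and \ref{T2.1}.
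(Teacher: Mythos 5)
Your proposal is correct and follows essentially the same route as the paper: write $\mathbf{U}=\nabla u$, observe $\delta_{3}(\mathbf{U})=\delta_{1}(u)$, identify $E_{CFHUP}$ with $\nabla E_{SHUP}$, and invoke Theorems \ref{T1} and \ref{T2.1}. The only difference is that you spell out the bookkeeping (membership of $u$ in $X$, the identity $\int|\nabla\mathbf{U}|^{2}\,dx=\int|\Delta u|^{2}\,dx$ via double integration by parts, and the bijection of optimizer sets) which the paper's proof leaves implicit.
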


Another consequence of our main results is the following second order
Poincar\'{e} type inequality with Gaussian measure that is of independent interest:

\begin{theorem}
\label{T4}For all $u\in X$, we have%
\[
\int_{\mathbb{R}^{N}}\left\Vert \nabla^{2}u-x\otimes\nabla u\right\Vert
_{HS}^{2}e^{-\left\vert x\right\vert ^{2}}dx\geq C\left(  N\right)  \inf
_{c}\int_{\mathbb{R}^{N}}\left\vert \nabla u-\left(  u-c\right)  x\right\vert
^{2}e^{-\left\vert x\right\vert ^{2}}dx.
\]
Moreover, the constant $C\left(  N\right)  $ is sharp.
\end{theorem}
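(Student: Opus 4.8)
The plan is to deduce Theorem \ref{T4} directly from Theorem \ref{T2} via the exponential substitution already recorded in the introduction, so that the second-order Gaussian energy on the left of Theorem \ref{T4} is identified with the linearized deficit $\delta_{2}$. Concretely, the first step is to set $w=ue^{-\left\vert x\right\vert^{2}/2}$ (equivalently $u=we^{\left\vert x\right\vert^{2}/2}$) and invoke the identity
\[
\int_{\mathbb{R}^{N}}\left\vert \Delta w\right\vert^{2}dx+\int_{\mathbb{R}^{N}}\left\vert x\right\vert^{2}\left\vert \nabla w\right\vert^{2}dx-\left(N+2\right)\int_{\mathbb{R}^{N}}\left\vert \nabla w\right\vert^{2}dx=\int_{\mathbb{R}^{N}}\left\Vert \nabla^{2}u-x\otimes\nabla u\right\Vert_{HS}^{2}e^{-\left\vert x\right\vert^{2}}dx,
\]
whose left-hand side is exactly $\delta_{2}(w)$. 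In other words, under the bijection $u\mapsto w$, the quantity on the left of Theorem \ref{T4} is precisely $\delta_{2}(w)$, which reduces the theorem to an application of Theorem \ref{T2} followed by an elementary rewriting.

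With this reformulation, I apply Theorem \ref{T2} to $w$ to obtain
\[
\int_{\mathbb{R}^{N}}\left\Vert \nabla^{2}u-x\otimes\nabla u\right\Vert_{HS}^{2}e^{-\left\vert x\right\vert^{2}}dx=\delta_{2}(w)\geq C\left(N\right)\inf_{c}\int_{\mathbb{R}^{N}}\left\vert \nabla\left(w-ce^{-\left\vert x\right\vert^{2}/2}\right)\right\vert^{2}dx.
\]
The remaining task is purely computational. Since $w-ce^{-\left\vert x\right\vert^{2}/2}=\left(u-c\right)e^{-\left\vert x\right\vert^{2}/2}$, the product rule gives
\[
\nabla\left(w-ce^{-\left\vert x\right\vert^{2}/2}\right)=e^{-\left\vert x\right\vert^{2}/2}\left(\nabla u-\left(u-c\right)x\right),
\]
whence $\left\vert \nabla\left(w-ce^{-\left\vert x\right\vert^{2}/2}\right)\right\vert^{2}=e^{-\left\vert x\right\vert^{2}}\left\vert \nabla u-\left(u-c\right)x\right\vert^{2}$. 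Substituting this into the infimum yields exactly the right-hand side of Theorem \ref{T4}, which establishes the inequality.

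For the sharpness of $C(N)$, I transfer the sharpness already proved in Theorem \ref{T2}. Because the substitution $u\mapsto w=ue^{-\left\vert x\right\vert^{2}/2}$ is a bijection that identifies the left-hand side of Theorem \ref{T4} with $\delta_{2}(w)$ and the right-hand side with $\inf_{c}\int|\nabla(w-ce^{-|x|^{2}/2})|^{2}dx$ term by term, any (near-)extremal sequence for the ratio in Theorem \ref{T2} pulls back to a (near-)extremal sequence for the ratio in Theorem \ref{T4} and vice versa; hence the optimal constants coincide. The algebraic heart of the argument is thus a one-line change of variables, and the only genuine point requiring care is the functional-analytic bookkeeping: verifying that the exponential substitution maps the completion $X$ onto the appropriate class of functions for which both sides of the identity are finite, and that the identity itself extends from $C_{0}^{\infty}(\mathbb{R}^{N})$ to all of $X$ by density. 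This domain-matching step is where I expect the only real technical friction to lie.
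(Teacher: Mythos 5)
Your proposal is correct and is essentially identical to the paper's own proof: the paper likewise applies Theorem \ref{T2} to $ue^{-|x|^{2}/2}$, uses the identity equating $\delta_{2}$ of that function with the Gaussian Hessian energy, and rewrites $\nabla\bigl((u-c)e^{-|x|^{2}/2}\bigr)=e^{-|x|^{2}/2}\bigl(\nabla u-(u-c)x\bigr)$ inside the infimum, with sharpness inherited from Theorem \ref{T2} through the same identification. The only difference is that you explicitly flag the domain-matching/density issue for the exponential substitution, which the paper passes over in silence.
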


We note that in \cite{DN25}, Theorem \ref{T4} has been investigated for even
functions. However, there is no estimate for arbitrary functions in
\cite{DN25}.

Our paper is organized as follows: In Section 2, we will discuss the spherical
harmonics decomposition and some useful lemmas. Also, to briefly illustrate
for our main idea, we present how to use spherical harmonics decomposition to
establish a symmetrization principle for the stability of the first order HUP
and give a new proof of Theorem \ref{A}. Theorem \ref{T3} and Theorem \ref{T4}
will also be proved in this section. Our main results\ (Theorem \ref{T1},
Theorem \ref{L1}, Theorem \ref{T2} and Theorem \ref{T2.1}) will be studied in
Section 3.

\section{Preliminaries and some useful results}

\subsection{Some useful lemmas}

Let $E_{SHUP}=\{\alpha e^{-\frac{\beta}{2}|x|^{2}},$ $\alpha\in\mathbb{R},$
$\beta>0\}$ be the set of all the optimizers of the second order HUP
(\ref{HUP_curl_scalar}). Then, we have

\begin{lemma}
\label{PropOfE} The set $E_{SHUP}$ is closed under the seminorm $\Vert
\nabla(\cdot)\Vert_{L^{2}(\mathbb{R}^{N})}$.
%In particular, for all $u \in X=H^2(\mathbb{R}^N)\cap \{u: \int_{\mathbb{R}^N}|x|^2|\nabla u|^2dx< \infty\}$, there exists $v^*=v^*(u) \in E$ such that
%$$\|\nabla(u-v^*)\|_{L^2(\mathbb{R}^N)}=\inf_{v \in E} \{\|\nabla(u-v)\|_{L^2(\mathbb{R}^N)}\}.$$

\end{lemma}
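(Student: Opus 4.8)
The plan is to show that $E_{SHUP}$ is closed under the seminorm $\|\nabla(\cdot)\|_{L^2(\mathbb{R}^N)}$, meaning that if a sequence $u_n = \alpha_n e^{-\frac{\beta_n}{2}|x|^2} \in E_{SHUP}$ is Cauchy (equivalently convergent) in this seminorm to some limit $u$, then $u \in E_{SHUP}$ as well (allowing the degenerate case $u \equiv 0$, which corresponds to $\alpha = 0$). First I would compute the seminorm explicitly on an element of $E_{SHUP}$. For $u = \alpha e^{-\frac{\beta}{2}|x|^2}$ one has $\nabla u = -\alpha\beta\, x\, e^{-\frac{\beta}{2}|x|^2}$, so a direct Gaussian integral gives
\[
\|\nabla u\|_2^2 = \alpha^2 \beta^2 \int_{\mathbb{R}^N} |x|^2 e^{-\beta |x|^2}\, dx = \alpha^2 \beta^2 \cdot \frac{N}{2\beta}\left(\frac{\pi}{\beta}\right)^{N/2} = c_N\, \alpha^2 \beta^{1 - N/2},
\]
where $c_N = \frac{N}{2}\pi^{N/2}$ is a dimensional constant. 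This reduces the problem to understanding the two-parameter family of values $c_N\,\alpha^2\beta^{1-N/2}$ together with the actual convergence of the functions.

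Next I would argue as follows. Suppose $u_n = \alpha_n e^{-\frac{\beta_n}{2}|x|^2}$ with $\|\nabla(u_n - u_m)\|_2 \to 0$. The main step is to extract convergence of the parameters. I would first handle the norms: since $\{u_n\}$ is Cauchy it is bounded, so $c_N \alpha_n^2 \beta_n^{1-N/2}$ is bounded. If $\|\nabla u_n\|_2 \to 0$, then (as I will verify) $u_n \to 0$ in the relevant sense and the limit is the trivial element $0 = 0\cdot e^{-\frac{1}{2}|x|^2} \in E_{SHUP}$, so we are done. Otherwise, passing to a subsequence, $\|\nabla u_n\|_2$ is bounded below by a positive constant. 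The key is then to show that $\beta_n$ stays in a compact subset of $(0,\infty)$, i.e. is bounded above and bounded away from $0$; if $\beta_n \to 0$ or $\beta_n \to \infty$ along a subsequence, the corresponding gradients would concentrate or spread out and one can show directly (again by explicit Gaussian computation of $\|\nabla(u_n - u_m)\|_2^2$ as a function of $\alpha_n,\alpha_m,\beta_n,\beta_m$) that the Cauchy property forces $\beta_n$ to converge to some $\beta \in (0,\infty)$ and correspondingly $\alpha_n \to \alpha$. Continuity of the map $(\alpha,\beta) \mapsto \alpha e^{-\frac{\beta}{2}|x|^2}$ into the completion $X$ then yields $u = \alpha e^{-\frac{\beta}{2}|x|^2} \in E_{SHUP}$.

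Concretely, the cleanest route to the parameter convergence is to write out
\[
\|\nabla(u_n - u_m)\|_2^2 = \|\nabla u_n\|_2^2 - 2\langle \nabla u_n, \nabla u_m\rangle + \|\nabla u_m\|_2^2,
\]
where the cross term $\langle \nabla u_n, \nabla u_m\rangle = \alpha_n\alpha_m\beta_n\beta_m \int_{\mathbb{R}^N} |x|^2 e^{-\frac{\beta_n+\beta_m}{2}|x|^2}\,dx$ is again an explicit elementary Gaussian integral equal to a constant times $\alpha_n\alpha_m\beta_n\beta_m (\beta_n+\beta_m)^{-N/2-1}$. The Cauchy condition says this expression tends to $0$ as $n,m\to\infty$. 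Treating the pair $(\alpha_n^2\beta_n^{1-N/2}, \beta_n)$ and using the strict concavity/convexity structure of the Gaussian overlap as a function of $\beta_n,\beta_m$, one deduces that both sequences converge. I expect the main obstacle to be precisely this extraction of parameter convergence: ruling out the degenerate scalings $\beta_n \to 0$ and $\beta_n \to \infty$ (where functions fail to converge in $X$ even though individual seminorms might stay bounded), and handling the case where only the seminorm degenerates to $0$. These are, however, standard and amount to careful bookkeeping with the one-variable function $\beta \mapsto (\beta_n+\beta)^{-N/2-1}$ and the Cauchy inequality; once the parameters are pinned down, closedness follows immediately from continuity.
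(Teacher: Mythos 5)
Your argument is correct in substance, but it takes a genuinely different route from the paper's. The paper never computes the Gaussian--Gaussian overlap: it expands $\|\nabla(u-v_j)\|_2^2$ against the \emph{limit} function $u$ (normalized so $\|\nabla u\|_2=1$), and kills the cross term $2\alpha_j\beta_j^{1/2}\int_{\mathbb{R}^N}\nabla u\cdot(\beta_j^{1/2}x)e^{-|\beta_j^{1/2}x|^2/2}\,dx$ in the regimes $\beta_j\to\infty$ and $\beta_j\to 0$ by an $\epsilon$--$R$ splitting over $B_R$ and $B_R^c$ (an argument taken from \cite{DN23}), reaching the contradiction $0=\lim_j\|\nabla(u-v_j)\|_2^2>1$; convergence of $(\alpha_j,\beta_j)$ then follows from Bolzano--Weierstrass. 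You instead work intrinsically inside $E_{SHUP}$ with the Cauchy property, and your plan does close. Writing $s_n=\|\nabla u_n\|_2^2$, your two explicit integrals combine to
\[
\|\nabla(u_n-u_m)\|_2^2=s_n+s_m-2\,\mathrm{sign}(\alpha_n\alpha_m)\sqrt{s_n s_m}\,\rho_{nm},\qquad \rho_{nm}:=\Bigl(\tfrac{2\sqrt{\beta_n\beta_m}}{\beta_n+\beta_m}\Bigr)^{\frac{N}{2}+1}\le 1,
\]
so if $s_n\to L^2>0$ (the case $L=0$ being trivial), the Cauchy property forces the signs $\mathrm{sign}(\alpha_n\alpha_m)$ to agree eventually and $\rho_{nm}\to 1$ jointly in $n,m$; by the equality case of AM--GM this means $\beta_n/\beta_m\to 1$, i.e.\ $\{\log\beta_n\}$ is a Cauchy sequence in $\mathbb{R}$, hence $\beta_n\to\beta\in(0,\infty)$ and then $\alpha_n\to\alpha$. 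This is exactly the ``bookkeeping'' you deferred, and it rules out the degenerate scalings $\beta_n\to 0,\infty$ automatically rather than as separate contradiction cases. What each approach buys: yours is fully explicit and self-contained (the unknown limit $u$ enters only in the final continuity step), whereas the paper's avoids overlap formulas at the price of the measure-theoretic splitting argument. One detail you should not gloss over at the end: $\|\nabla(u-\alpha e^{-\frac{\beta}{2}|x|^2})\|_2=0$ only says the difference is a.e.\ constant, since constants form the kernel of the seminorm; as in the paper, you must invoke $u\in X$ (decay at infinity) to conclude that the constant vanishes and hence $u\in E_{SHUP}$.
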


\begin{proof}
Let $u$ be an arbitrary function in $X$. Assume that there exists a sequence
$\{v_{j}\}_{j}\subset E_{SHUP}$ such that
\[
\Vert\nabla(v_{j}-u)\Vert_{L^{2}(\mathbb{R}^{N})}\rightarrow0
\]
as $j\rightarrow\infty$. We will show that $u\in$ $E_{SHUP}$. If $\Vert\nabla
u\Vert_{L^{2}(\mathbb{R}^{N})}=0$, then $u=0$ and $u\in$ $E_{SHUP}$.
Therefore, we can assume that $\Vert\nabla u\Vert_{L^{2}(\mathbb{R}^{N})}%
\neq0$. Moreover, WLOG, we assume that $\Vert\nabla u\Vert_{L^{2}%
(\mathbb{R}^{N})}=1$. Since
\[
\Vert\nabla(u-v_{j})\Vert_{L^{2}(\mathbb{R}^{N})}\geq\left\vert \Vert\nabla
u\Vert_{L^{2}(\mathbb{R}^{N})}-\Vert\nabla(v_{j})\Vert_{L^{2}(\mathbb{R}^{N}%
)}\right\vert =\left\vert 1-\Vert\nabla(v_{j})\Vert_{L^{2}(\mathbb{R}^{N}%
)}\right\vert ,
\]
we have that
\[
\dfrac{1}{2}\leq\Vert\nabla(v_{j})\Vert_{L^{2}(\mathbb{R}^{N})}\leq\dfrac
{3}{2}%
\]
for $j$ large enough. Assume that $v_{j}=\alpha_{j}e^{-\frac{\beta_{j}}%
{2}|x|^{2}}$. Then by computing directly, we get
\[
\int_{\mathbb{R}^{N}}|\nabla v_{j}|^{2}dx=\alpha_{j}^{2}\beta_{j}^{2}%
\int_{\mathbb{R}^{N}}e^{-\beta_{j}|x|^{2}}|x|^{2}dx=\alpha_{j}^{2}\beta
_{j}^{1-\frac{N}{2}}\int_{\mathbb{R}^{N}}e^{-|x|^{2}}|x|^{2}dx=\dfrac{N}{2}%
\pi^{N/2}\alpha_{j}^{2}\beta_{j}^{1-\frac{N}{2}}.
\]
This implies that there exist constants $M,m>0$ such that
\begin{equation}
m^{2}\leq\alpha_{j}^{2}\beta_{j}^{1-\frac{N}{2}}\leq M^{2}, \label{relationab}%
\end{equation}
for $j$ large enough. We claim that both $\{\alpha_{j}\}_{j}$ and
$\{b_{j}\}_{j}$ are bounded and therefore we can assume that $\alpha
_{j}\rightarrow\alpha$, and $\beta_{j}\rightarrow\beta>0$ up to a subsequence.
To verify our claim, we will use an argument from \cite{DN23}. First, we note
that
\[
\int_{\mathbb{R}^{N}}|\nabla u-\nabla v_{j}|^{2}dx=1+\alpha_{j}^{2}\beta
_{j}^{1-N/2}\int_{\mathbb{R}^{N}}|x|^{2}e^{-|x|^{2}}dx+2\alpha_{j}\beta
_{j}^{1/2}\int_{\mathbb{R}^{N}}\nabla u\cdot(\beta_{j}^{1/2}x)e^{-|\beta
_{j}^{1/2}x|^{2}/2}dx.
\]
Up to a subsequence, by Bolzano-Weierstrass theorem, we are able to reduce to
three cases:

\begin{itemize}
\item[i)] $\displaystyle\lim_{j \rightarrow\infty} \beta_{j}=\infty$;

\item[ii)] $\displaystyle\lim_{j \rightarrow\infty} \beta_{j}=0$;

\item[iii)] $\displaystyle\lim_{j \rightarrow\infty} \beta_{j} \in(0,\infty)$.
\end{itemize}

We will prove that the first two cases can not happen. Indeed, suppose that
$\displaystyle\lim_{j\rightarrow\infty}\beta_{j}=\infty$. For any $\epsilon
>0$, there exists a constant $R>0$ small enough such that $\int_{B_{R}}|\nabla
u|^{2}dx<\epsilon^{2}$, where $B_{R}:=\{x\in\mathbb{R}^{N}:|x|<R\}$. Then,
\begin{align*}
&  \left\vert \int_{\mathbb{R}^{N}}\nabla u\cdot(\beta_{j}^{1/2}%
x)e^{-|\beta_{j}^{1/2}x|^{2}/2}dx\right\vert \\
&  \leq\left(  \int_{B_{R}}+\int_{B_{R}^{c}}\right)  \left\vert \nabla
u\right\vert \left\vert \beta_{j}^{1/2}x\right\vert e^{-|\beta_{j}^{1/2}%
x|^{2}/2}dx\\
&  \leq\epsilon\left(  \int_{B_{R}}\left\vert \beta_{j}^{1/2}x\right\vert
^{2}e^{-|\beta_{j}^{1/2}x|^{2}}dx\right)  ^{1/2}+\left(  \int_{B_{R}^{c}%
}|\nabla u|^{2}dx\right)  ^{1/2}\left(  \int_{B_{R}^{c}}\left\vert \beta
_{j}^{1/2}x\right\vert ^{2}e^{-|\beta_{j}^{1/2}x|^{2}}dx\right)  ^{1/2}\\
&  \leq\epsilon\left(  \int_{B_{R}}\left\vert \beta_{j}^{1/2}x\right\vert
^{2}e^{-|\beta_{j}^{1/2}x|^{2}}dx\right)  ^{1/2}+\left(  \int_{B_{R}^{c}%
}\left\vert \beta_{j}^{1/2}x\right\vert ^{2}e^{-|\beta_{j}^{1/2}x|^{2}%
}dx\right)  ^{1/2}\\
&  \leq\beta_{j}^{-N/4}\epsilon\left(  \int_{B_{R\beta_{j}^{1/2}}}\left\vert
x\right\vert ^{2}e^{-|x|^{2}}dx\right)  ^{1/2}+\beta_{j}^{-N/4}\left(
\int_{B_{R\beta_{j}^{1/2}}^{c}}\left\vert x\right\vert ^{2}e^{-|x|^{2}%
}dx\right)  ^{1/2},
\end{align*}
which implies that
\begin{align*}
&  \left\vert \alpha_{j}\beta_{j}^{1/2}\int_{\mathbb{R}^{N}}\nabla
u\cdot(\beta_{j}^{1/2}x)e^{-|\beta_{j}^{1/2}x|^{2}/2}dx\right\vert \\
&  \leq M\epsilon\left(  \int_{B_{R\beta_{j}^{1/2}}}\left\vert x\right\vert
^{2}e^{-|x|^{2}}dx\right)  ^{1/2}+M\left(  \int_{B_{R\beta_{j}^{1/2}}^{c}%
}\left\vert x\right\vert ^{2}e^{-|x|^{2}}dx\right)  ^{1/2}.
\end{align*}
Letting $j\rightarrow\infty$, we obtain
\[
\limsup_{j\rightarrow\infty}\left\vert \alpha_{j}\beta_{j}^{1/2}%
\int_{\mathbb{R}^{N}}\nabla u\cdot(\beta_{j}^{1/2}x)e^{-|\beta_{j}^{1/2}%
x|^{2}/2}dx\right\vert \leq M\epsilon\left(  \int_{\mathbb{R}^{N}}%
|x|^{2}e^{-|x|^{2}}\right)  ^{1/2}.
\]
Since $\epsilon>0$ is arbitrary, we have
\[
\limsup_{j\rightarrow\infty}\left\vert \alpha_{j}\beta_{j}^{1/2}%
\int_{\mathbb{R}^{N}}\nabla u\cdot(\beta_{j}^{1/2}x)e^{-|\beta_{j}^{1/2}%
x|^{2}/2}dx\right\vert =0,
\]
or
\[
0=\displaystyle\lim_{j\rightarrow\infty}\int_{\mathbb{R}^{N}}|\nabla u-\nabla
v_{j}|^{2}dx>1,
\]
which is a contradiction.

Now assume that $\beta_{j}\rightarrow0$ as $j\rightarrow\infty$. For any
$\epsilon>0$, there exists a constant $R>0$ large enough such that
$\int_{B_{R}^{c}}|\nabla u|^{2}dx<\epsilon^{2}$. Thus,
\begin{align*}
&  \left\vert \int_{\mathbb{R}^{N}}\nabla u\cdot(\beta_{j}^{1/2}%
x)e^{-|\beta_{j}^{1/2}x|^{2}/2}dx\right\vert \\
&  \leq\left(  \int_{B_{R}^{c}}+\int_{B_{R}}\right)  \left\vert \nabla
u\right\vert \left\vert \beta_{j}^{1/2}x\right\vert e^{-|\beta_{j}^{1/2}%
x|^{2}/2}dx\\
&  \leq\epsilon\left(  \int_{B_{R}^{c}}\left\vert \beta_{j}^{1/2}x\right\vert
^{2}e^{-|\beta_{j}^{1/2}x|^{2}}dx\right)  ^{1/2}+\left(  \int_{B_{R}}|\nabla
u|^{2}dx\right)  ^{1/2}\left(  \int_{B_{R}}\left\vert \beta_{j}^{1/2}%
x\right\vert ^{2}e^{-|\beta_{j}^{1/2}x|^{2}}dx\right)  ^{1/2}\\
&  \leq\epsilon\left(  \int_{B_{R}^{c}}\left\vert \beta_{j}^{1/2}x\right\vert
^{2}e^{-|\beta_{j}^{1/2}x|^{2}}dx\right)  ^{1/2}+\left(  \int_{B_{R}%
}\left\vert \beta_{j}^{1/2}x\right\vert ^{2}e^{-|\beta_{j}^{1/2}x|^{2}%
}dx\right)  ^{1/2}\\
&  \leq\beta_{j}^{-N/4}\epsilon\left(  \int_{B_{R\beta_{j}^{1/2}}^{c}%
}\left\vert x\right\vert ^{2}e^{-|x|^{2}}dx\right)  ^{1/2}+\beta_{j}%
^{-N/4}\left(  \int_{B_{R\beta_{j}^{1/2}}}\left\vert x\right\vert
^{2}e^{-|x|^{2}}dx\right)  ^{1/2}.
\end{align*}
Therefore
\begin{align*}
&  \left\vert \alpha_{j}\beta_{j}^{1/2}\int_{\mathbb{R}^{N}}\nabla
u\cdot(\beta_{j}^{1/2}x)e^{-|\beta_{j}^{1/2}x|^{2}/2}dx\right\vert \\
&  \leq M\epsilon\left(  \int_{B_{R\beta_{j}^{1/2}}^{c}}\left\vert
x\right\vert ^{2}e^{-|x|^{2}}dx\right)  ^{1/2}+M\left(  \int_{B_{R\beta
_{j}^{1/2}}}\left\vert x\right\vert ^{2}e^{-|x|^{2}}dx\right)  ^{1/2}.
\end{align*}
Letting $j\rightarrow\infty$, we obtain
\[
\limsup_{j\rightarrow\infty}\left\vert \alpha_{j}\beta_{j}^{1/2}%
\int_{\mathbb{R}^{N}}\nabla u\cdot(\beta_{j}^{1/2}x)e^{-|\beta_{j}^{1/2}%
x|^{2}/2}dx\right\vert \leq M\epsilon\left(  \int_{\mathbb{R}^{N}}%
|x|^{2}e^{-|x|^{2}}\right)  ^{1/2}.
\]
Since $\epsilon>0$ is arbitrary, then
\[
\limsup_{j\rightarrow\infty}\left\vert \alpha_{j}\beta_{j}^{1/2}%
\int_{\mathbb{R}^{N}}\nabla u\cdot(\beta_{j}^{1/2}x)e^{-|\beta_{j}^{1/2}%
x|^{2}/2}dx\right\vert =0,
\]
or
\[
0=\displaystyle\lim_{j\rightarrow\infty}\int_{\mathbb{R}^{N}}|\nabla u-\nabla
v_{j}|^{2}dx>1,
\]
which is also a contradiction.

Therefore, there exists $\beta\in(0,\infty)$ such that $\displaystyle\lim
_{j\rightarrow\infty}\beta_{j}=\beta$. From \eqref{relationab}, $\{\alpha
_{j}\}$ is bounded. Then, up to a subsequence, we can assume that $\alpha
_{j}\rightarrow\alpha\in\mathbb{R}$, and $\beta_{j}\rightarrow\beta
\in(0,\infty)$.

By defining $v(x)=\alpha e^{-\frac{\beta}{2}|x|^{2}}$, from the dominated
convergence theorem, $\nabla v_{j}\rightarrow\nabla v$ in $L^{2}%
(\mathbb{R}^{N})$. Hence,
\[
\Vert\nabla(u-v)\Vert_{L^{2}(\mathbb{R}^{N})}\leq\Vert\nabla(u-v_{j}%
)\Vert_{L^{2}(\mathbb{R}^{N})}+\Vert\nabla(v-v_{j})\Vert_{L^{2}(\mathbb{R}%
^{N})}\rightarrow0,
\]
as $j\rightarrow\infty$. Then, $u-v$ is constant a.e. in $\mathbb{R}^{N}.$
Since $u\in X$, $u-v=0$, or $u=v=\alpha e^{-\frac{\beta}{2}|x|^{2}}\in
E_{SHUP}$. Equivalently, $E_{SHUP}$ is closed under the seminorm $\Vert
\nabla(\cdot)\Vert_{L^{2}(\mathbb{R}^{N})}$.

%In particular, by the continuity of $L^2$-norm, for any $u \in X$, we can find out $v^*=v^*(u) \in E$ such that
%$$\|\nabla(u-v^*)\|_{L^2(\mathbb{R}^N)}=\inf_{v \in E} \{\|\nabla(u-v)\|_{L^2(\mathbb{R}^N)}\}.$$
%as desired.

\end{proof}

We also recall the following result in \cite{DDLL24}.

Let $\mathcal{F}_{k}\left(  \phi\right)  \left(  \xi\right)  =\int
_{\mathbb{R}^{k}}\phi\left(  x\right)  e^{-2\pi ix\cdot\xi}dx$ be the Fourier
transform of $\phi$ on $\mathbb{R}^{k}$. Then, we have

\begin{lemma}
\label{T3.1}Let $u\left(  \cdot\right)  =U\left(  \left\vert \cdot\right\vert
\right)  $ and $v\left(  \cdot\right)  =V\left(  \left\vert \cdot\right\vert
\right)  $ be such that $\mathcal{F}_{N}\left(  u\right)  $ and $\mathcal{F}%
_{N+2}\left(  v\right)  $ are well-defined, $\lim_{r\rightarrow0^{+}}U\left(
r\right)  r^{N}=0$, $\lim_{r\rightarrow\infty}U\left(  r\right)  r^{\frac
{N-1}{2}}=0$, and $rV\left(  r\right)  =U^{\prime}\left(  r\right)  $ for
$r>0$. Then $\mathcal{F}_{N+2}\left(  v\right)  =-2\pi\mathcal{F}_{N}\left(
u\right)  $. As a consequence, for $\alpha\geq0$, if defined, then
\[
\int_{\mathbb{R}^{N}}|\Delta^{\alpha+\frac{1}{2}}u|^{2}dx=\frac{|\mathbb{S}%
^{N-1}|}{|\mathbb{S}^{N+1}|}\int_{\mathbb{R}^{N+2}}|\Delta^{\alpha}v|^{2}dx.
\]

\end{lemma}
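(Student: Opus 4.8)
The plan is to reduce everything to the Hankel--transform (Bessel function) representation of the Fourier transform of a radial function, together with a single recurrence relation for Bessel functions. Recall that with the convention $\mathcal{F}_{k}(\phi)(\xi)=\int_{\mathbb{R}^{k}}\phi(x)e^{-2\pi ix\cdot\xi}\,dx$, a radial function $u(x)=U(|x|)$ on $\mathbb{R}^{N}$ has a radial Fourier transform given by
\[
\mathcal{F}_{N}(u)(\xi)=2\pi|\xi|^{1-\frac{N}{2}}\int_{0}^{\infty}U(r)\,J_{\frac{N}{2}-1}(2\pi|\xi|r)\,r^{\frac{N}{2}}\,dr,
\]
with the analogous formula in dimension $N+2$ carrying Bessel index $\frac{N}{2}$. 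First I would write out both $\mathcal{F}_{N}(u)$ and $\mathcal{F}_{N+2}(v)$ in this form, substituting $V(r)=U'(r)/r$ into the second one so that the weight $r^{\frac{N}{2}+1}$ combines with the $1/r$ to leave exactly $r^{\frac{N}{2}}$; this produces $\mathcal{F}_{N+2}(v)(\xi)=2\pi|\xi|^{-\frac{N}{2}}\int_{0}^{\infty}U'(r)\,J_{\frac{N}{2}}(2\pi|\xi|r)\,r^{\frac{N}{2}}\,dr$.

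The heart of the argument is an integration by parts against the Bessel recurrence $\frac{d}{dr}\!\big[r^{\frac{N}{2}}J_{\frac{N}{2}}(ar)\big]=a\,r^{\frac{N}{2}}J_{\frac{N}{2}-1}(ar)$, where $a=2\pi|\xi|$. Applying this to $\int_{0}^{\infty}U'(r)\,r^{\frac{N}{2}}J_{\frac{N}{2}}(ar)\,dr$ transfers the derivative from $U$ onto the Bessel factor and lowers the index from $\frac{N}{2}$ to $\frac{N}{2}-1$, producing (up to the factor $-a=-2\pi|\xi|$) precisely the integral appearing in $\mathcal{F}_{N}(u)$. Bookkeeping of the constants $2\pi$ and the powers of $|\xi|$ then gives $\mathcal{F}_{N+2}(v)=-2\pi\,\mathcal{F}_{N}(u)$.

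The one point requiring genuine care --- and the main obstacle --- is showing that the boundary terms in the integration by parts vanish. That boundary term is $\big[U(r)\,r^{\frac{N}{2}}J_{\frac{N}{2}}(ar)\big]_{0}^{\infty}$, and this is exactly where the hypotheses enter: near $r=0$ the small-argument asymptotic $J_{\frac{N}{2}}(ar)\sim c\,(ar)^{\frac{N}{2}}$ makes $r^{\frac{N}{2}}J_{\frac{N}{2}}(ar)$ behave like $r^{N}$, so $\lim_{r\to0^{+}}U(r)r^{N}=0$ kills it, while near $r=\infty$ the decay $J_{\frac{N}{2}}(ar)=O(r^{-1/2})$ makes the factor behave like $r^{\frac{N-1}{2}}$, so $\lim_{r\to\infty}U(r)r^{\frac{N-1}{2}}=0$ kills it. The standing hypotheses that $\mathcal{F}_{N}(u)$ and $\mathcal{F}_{N+2}(v)$ are well defined are what I would invoke to guarantee convergence of the transformed integrals and legitimacy of the manipulations.

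For the consequence I would pass through Plancherel. Writing the symbol of $\Delta^{\beta}$ as $(2\pi|\xi|)^{2\beta}$, Plancherel gives $\int_{\mathbb{R}^{N}}|\Delta^{\alpha+\frac{1}{2}}u|^{2}\,dx=\int_{\mathbb{R}^{N}}(2\pi|\xi|)^{4\alpha+2}|\mathcal{F}_{N}(u)(\xi)|^{2}\,d\xi$ and likewise $\int_{\mathbb{R}^{N+2}}|\Delta^{\alpha}v|^{2}\,dx=\int_{\mathbb{R}^{N+2}}(2\pi|\eta|)^{4\alpha}|\mathcal{F}_{N+2}(v)(\eta)|^{2}\,d\eta$. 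Converting both to one-dimensional radial integrals introduces the surface measures $|\mathbb{S}^{N-1}|$ and $|\mathbb{S}^{N+1}|$, and substituting $|\mathcal{F}_{N+2}(v)|=2\pi|\mathcal{F}_{N}(u)|$ makes the resulting factor $(2\pi)^{2}$ combine with the two extra powers of $\rho$ coming from $\rho^{N+1}$ versus $\rho^{N-1}$ so as to match the shift in the exponent of the symbol exactly. Collecting constants yields
\[
\int_{\mathbb{R}^{N}}|\Delta^{\alpha+\frac{1}{2}}u|^{2}\,dx=\frac{|\mathbb{S}^{N-1}|}{|\mathbb{S}^{N+1}|}\int_{\mathbb{R}^{N+2}}|\Delta^{\alpha}v|^{2}\,dx,
\]
as claimed.
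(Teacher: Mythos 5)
Your proof is correct. One remark on context: this paper does not actually prove Lemma \ref{T3.1} --- it is recalled, without proof, from the reference [DDLL24] (Dao--Do--Lam--Lu), so there is no in-paper argument to compare against; your proposal supplies a genuine self-contained proof. The route you take is the natural one: write both transforms via the Hankel (Bessel) representation $\mathcal{F}_{N}(u)(\xi)=2\pi|\xi|^{1-\frac{N}{2}}\int_{0}^{\infty}U(r)J_{\frac{N}{2}-1}(2\pi|\xi|r)r^{\frac{N}{2}}dr$, substitute $V=U'/r$ so the dimension-$(N+2)$ integral carries the kernel $r^{\frac{N}{2}}J_{\frac{N}{2}}(2\pi|\xi|r)$ against $U'$, and integrate by parts using $\frac{d}{dr}\bigl[r^{\frac{N}{2}}J_{\frac{N}{2}}(ar)\bigr]=a\,r^{\frac{N}{2}}J_{\frac{N}{2}-1}(ar)$; this is exactly the classical ``dimension walk'' identity for radial Fourier transforms. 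Your accounting of the two boundary terms is the right place to spend the care: the small-argument asymptotic $J_{\frac{N}{2}}(ar)\sim c(ar)^{\frac{N}{2}}$ matches the hypothesis $\lim_{r\to0^{+}}U(r)r^{N}=0$, and the decay $J_{\frac{N}{2}}(ar)=O(r^{-1/2})$ matches $\lim_{r\to\infty}U(r)r^{\frac{N-1}{2}}=0$, so both hypotheses of the lemma are used and used correctly. The Plancherel step is also right: with the convention that $\Delta^{s}$ has symbol $(2\pi|\xi|)^{2s}$, the factor $|\mathcal{F}_{N+2}(v)|^{2}=(2\pi)^{2}|\mathcal{F}_{N}(u)|^{2}$ combines with the radial measures $\rho^{N+1}$ versus $\rho^{N-1}$ to produce precisely the shift from symbol exponent $4\alpha$ to $4\alpha+2$, yielding the stated ratio $|\mathbb{S}^{N-1}|/|\mathbb{S}^{N+1}|$.
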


\subsection{Spherical harmonics decomposition}

Let $N\geq2$. We implement the well-known idea of decomposing $u$ into
spherical harmonics as follows. We apply the coordinates transformation
$x\in\mathbb{R}^{N}\setminus\left\{  0\right\}  \mapsto(r,\sigma)\in
(0,\infty)\times\mathbb{S}^{N-1}$ (where $\mathbb{S}^{N-1}$ is the $\left(
N-1\right)  $-dimensional sphere with respect to the Hausdorff measure in
$\mathbb{R}^{N}$) and, for any function $u\in C_{0}^{\infty}(\mathbb{R}^{N})$,
we may expand $u$ in terms of spherical harmonics
\begin{equation}
u(x)=u(r\sigma)=\sum_{k=0}^{\infty}\sum_{\ell=1}^{\dim\mathcal{H}_{k}}%
u_{k\ell}(r)\phi_{k\ell}(\sigma), \label{harmonics}%
\end{equation}
where $\{\phi_{k\ell}\}$, $k\geq0$, $\ell=1,\ldots,\dim\mathcal{H}_{k}$, is an
orthonormal basis in $L^{2}(\mathbb{S}^{N-1})$ constituted by spherical
harmonic functions $\phi_{k\ell}$ of degree $k$, and $\mathcal{H}_{k}$ is the
subspace of spherical harmonics of degree $k$. Such $\phi_{k\ell}$ are smooth
eigenfunctions of the Laplace-Beltrami operator $-\Delta_{\mathbb{S}^{N-1}}$
with the corresponding eigenvalues $c_{k}=k(k+N-2)$, $k\geq0$, i.e.
\begin{align}
-\Delta_{\mathbb{S}^{N-1}}\phi_{k\ell}=c_{k}\phi_{k\ell},\quad\left\langle
\phi_{k\ell},\phi_{m\ell^{\prime}}\right\rangle _{L^{2}(\mathbb{S}^{N-1}%
)}=\delta_{km}\delta_{\ell\ell^{\prime}},  &  \quad\forall k,m\in
\mathbb{N},\label{eigen}\\
&  \ell=1,\ldots,\dim\mathcal{H}_{k},\ell^{\prime}=1,\ldots,\dim
\mathcal{H}_{m}\nonumber
\end{align}
where $\left\langle \cdot,\cdot\right\rangle _{L^{2}(\mathbb{S}^{N-1})}$
denotes the scalar product in $L^{2}(\mathbb{S}^{N-1})$ and $\delta_{kl}$ is
the Kronecker symbol. See for example \cite{Stein70, Stein93}. In fact,
without altering the following proofs and for sake of simplicity, we choose to
use a single index and agree to write
\[
u(x)=u(r\sigma)=\sum_{k=0}^{\infty}u_{k}(r)\phi_{k}(\sigma),
\]
where $-\Delta_{\mathbb{S}^{N-1}}\phi_{k}=c_{k}\phi_{k}$. Moreover, to
simplify some computations below, we will choose such that
\[
\int_{\mathbb{S}^{N-1}}\left\vert \phi_{k}(\sigma)\right\vert ^{2}%
d\sigma=\frac{|\mathbb{S}^{N-1+2k}|}{|\mathbb{S}^{N-1}|}%
\]
where $d\sigma$ is the normalized surface measure on $\mathbb{S}^{N-1}$.

The Fourier coefficients $\{u_{k}\}_{k}$ belong to $C_{0}^{\infty}%
([0,\infty))$ and satisfy $u_{k}(r)=O(r^{k})$, $u_{k}^{\prime}(r)=O(r^{k-1})$
as $r\rightarrow0$. Also, the following formulas hold:
\begin{equation}
\Delta u=\mathcal{R}_{2}u+\frac{1}{r^{2}}\Delta_{\mathbb{S}^{N-1}}%
u,\quad|\nabla u|^{2}=|\partial_{r}u|^{2}+\frac{|\nabla_{\mathbb{S}^{N-1}%
}u|^{2}}{r^{2}} \label{Laplacian}%
\end{equation}
where, $\mathcal{R}_{2}u:=\partial_{rr}^{2}u+\frac{N-1}{r}\partial_{r}u$ is
the radial Laplacian, $\partial_{r}$ and $\partial_{rr}^{2}$ are both partial
derivatives of first and second order with respect to the radial component
$r$, whereas $\Delta_{\mathbb{S}^{N-1}}$ and $\nabla_{\mathbb{S}^{N-1}}$
represent the Laplace-Beltrami operator and respectively the gradient operator
with respect to the metric tensor on $\mathbb{S}^{N-1}$. We also note that the
behavior of $u_{k}(r)$ as $r\rightarrow0$ allows us to consider the change of
variables $u_{k}(r):=r^{k}v_{k}(r)$, where $v_{k}\in C_{0}^{\infty}[0,\infty
)$. Therefore, we can represent $u$ in the form
\begin{equation}
u(x)=u(r\sigma)=\sum_{k=0}^{\infty}u_{k}(r)\phi_{k}(\sigma)=\sum_{k=0}%
^{\infty}v_{k}(r)r^{k}\phi_{k}(\sigma). \label{shd}%
\end{equation}
Then, by applying Bochner's relation for spherical harmonics and Lemma
\ref{T3.1}, we obtain the following identities (see \cite{Bec08, CFL22}):
\begin{align*}
\int_{\mathbb{R}^{N}}|\Delta u|^{2}dx  &  =\sum_{k=0}^{\infty}\int
_{\mathbb{R}^{N+2k}}|\Delta v_{k}(\left\vert x\right\vert )|^{2}dx\\
&  =\sum_{k=0}^{\infty}|\mathbb{S}^{N-1+2k}|\left(  \int_{0}^{\infty
}r^{N+2k-1}\left\vert v_{k}^{\prime\prime}\right\vert ^{2}dr+(N+2k-1)\int
_{0}^{\infty}r^{N+2k-3}\left\vert v_{k}^{\prime}\right\vert ^{2}dr\right) \\
&  =\sum_{k=0}^{\infty}\int_{\mathbb{R}^{N+2k+2}}|\nabla w_{k}(\left\vert
x\right\vert )|^{2}dx
\end{align*}
and%
\begin{align*}
\int_{\mathbb{R}^{N}}|\nabla u|^{2}dx  &  =\sum_{k=0}^{\infty}\int
_{\mathbb{R}^{N+2k}}|\nabla v_{k}(\left\vert x\right\vert )|^{2}dx\\
&  =\sum_{k=0}^{\infty}|\mathbb{S}^{N-1+2k}|\int_{0}^{\infty}r^{N+2k-1}%
\left\vert v_{k}^{\prime}\right\vert ^{2}dr\\
&  =\sum_{k=0}^{\infty}\int_{\mathbb{R}^{N+2k+2}}|w_{k}(\left\vert
x\right\vert )|^{2}dx,
\end{align*}
where $w_{k}(r)=\sqrt{\frac{|\mathbb{S}^{N+1+2k}|}{|\mathbb{S}^{N-1+2k}|}%
}\dfrac{v_{k}^{\prime}(r)}{r}$.

Moreover,
\begin{align*}
\int_{\mathbb{R}^{N}}\left\vert x\right\vert ^{2}|\nabla u|^{2}dx  &
=\sum_{k=0}^{\infty}\int_{\mathbb{R}^{N+2k}}|\nabla v_{k}(\left\vert
x\right\vert )|^{2}\left\vert x\right\vert ^{2}dx-2\sum_{k=0}^{\infty}%
k\int_{\mathbb{R}^{N+2k}}|v_{k}(\left\vert x\right\vert )|^{2}dx\\
&  =\sum_{k=0}^{\infty}|\mathbb{S}^{N-1+2k}|\int_{0}^{\infty}\left(
\left\vert v_{k}^{\prime}\right\vert ^{2}r^{2}-2k\left\vert v_{k}\right\vert
^{2}\right)  r^{N+2k-1}dr.
\end{align*}
Therefore
\begin{align*}
\int_{\mathbb{R}^{N}}\left\vert x\right\vert ^{2}|\nabla u|^{2}dx  &
=\sum_{k=0}^{\infty}|\mathbb{S}^{N-1+2k}|\left[  \dfrac{(N+2k)^{2}%
-8k}{(N+2k)^{2}}\int_{0}^{\infty}\left\vert v_{k}^{\prime}\right\vert
^{2}r^{N+2k+1}dr\right. \\
&  \left.  +\int_{0}^{\infty}\left(  \dfrac{2\sqrt{2k}}{N+2k}rv_{k}^{\prime
}(r)+\sqrt{2k}v_{k}(r)\right)  ^{2}r^{N+2k-1}dr\right] \\
&  \geq\sum_{k=0}^{\infty}\dfrac{(N+2k)^{2}-8k}{(N+2k)^{2}}|\mathbb{S}%
^{N-1+2k}|\int_{0}^{\infty}\left\vert v_{k}^{\prime}\right\vert ^{2}%
r^{N+2k+1}dr\\
&  \geq\sum_{k=0}^{\infty}\dfrac{(N+2k)^{2}-8k}{(N+2k)^{2}}\int_{\mathbb{R}%
^{N+2k}}|\nabla v_{k}(\left\vert x\right\vert )|^{2}\left\vert x\right\vert
^{2}dx\\
&  \geq\sum_{k=0}^{\infty}\dfrac{(N+2k)^{2}-8k}{(N+2k)^{2}}\int_{\mathbb{R}%
^{N+2k+2}}|w_{k}(\left\vert x\right\vert )|^{2}\left\vert x\right\vert ^{2}dx.
\end{align*}

\subsection{A symmetrization principle for the stability of the first order
HUP}

To illustrate for our main idea, we will prove a symmetrization principle for
the stability of the Hardy type inequalities using spherical harmonics
decomposition. As a consequence, we deduce a symmetrization principle for the
stability of HUP.

\begin{theorem}
Assume that $\forall N\geq N_{0}$, $\forall k\geq1$, $\exists C_{N}>0$ such
that
\[
A_{N+2k}\left(  r\right)  \geq A_{N}\left(  r\right)  +C_{N}W_{N}\left(
r\right)
\]
and
\[
\int_{\mathbb{R}^{N}}|\nabla u|^{2}dx\geq\int_{\mathbb{R}^{N}}A_{N}\left(
\left\vert x\right\vert \right)  \left\vert u\right\vert ^{2}dx\text{ }\forall
u\in C_{0,\text{radial}}^{\infty}(\mathbb{R}^{N}).
\]
Then $\forall N\geq N_{0}$, $\forall u\in C_{0}^{\infty}(\mathbb{R}^{N})$
\begin{equation}
\int_{\mathbb{R}^{N}}|\nabla u|^{2}dx-\int_{\mathbb{R}^{N}}A_{N}\left(
\left\vert x\right\vert \right)  \left\vert u\right\vert ^{2}dx\geq
C_{N}\left(  \int_{\mathbb{R}^{N}}\left\vert u\right\vert ^{2}W_{N}\left(
\left\vert x\right\vert \right)  dx-\frac{\left(  \int_{\mathbb{R}^{N}}%
uB_{N}\left(  \left\vert x\right\vert \right)  W_{N}\left(  \left\vert
x\right\vert \right)  dx\right)  ^{2}}{\int_{\mathbb{R}^{N}}B_{N}^{2}\left(
\left\vert x\right\vert \right)  W_{N}\left(  \left\vert x\right\vert \right)
dx}\right)  \text{ } \label{G}%
\end{equation}
if and only if $\forall N\geq N_{0}$, $\forall u\in C_{0,\text{radial}%
}^{\infty}(\mathbb{R}^{N})$%
\begin{equation}
\int_{\mathbb{R}^{N}}|\nabla u|^{2}dx-\int_{\mathbb{R}^{N}}A_{N}\left(
\left\vert x\right\vert \right)  \left\vert u\right\vert ^{2}dx\geq
C_{N}\left(  \int_{\mathbb{R}^{N}}\left\vert u\right\vert ^{2}W_{N}\left(
\left\vert x\right\vert \right)  dx-\frac{\left(  \int_{\mathbb{R}^{N}}%
uB_{N}\left(  \left\vert x\right\vert \right)  W_{N}\left(  \left\vert
x\right\vert \right)  dx\right)  ^{2}}{\int_{\mathbb{R}^{N}}B_{N}^{2}\left(
\left\vert x\right\vert \right)  W_{N}\left(  \left\vert x\right\vert \right)
dx}\right)  \text{.} \label{R}%
\end{equation}

\end{theorem}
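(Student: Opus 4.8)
The plan is to prove the two implications separately, the forward one being immediate and the reverse one carrying all the content. For the direction $\eqref{G}\Rightarrow\eqref{R}$, note that every radial $u\in C_{0,\text{radial}}^\infty(\mathbb{R}^N)$ lies in $C_0^\infty(\mathbb{R}^N)$ and the two displayed inequalities are literally identical, so \eqref{R} is merely \eqref{G} restricted to the radial class. Thus I would concentrate on $\eqref{R}\Rightarrow\eqref{G}$, which I would establish through the spherical harmonics machinery developed above.

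Fix $N\geq N_0$ and $u\in C_0^\infty(\mathbb{R}^N)$, and expand it as in \eqref{shd}, namely $u(r\sigma)=\sum_{k\geq0}r^k v_k(r)\phi_k(\sigma)$. By the orthogonality of the $\phi_k$ and the Bochner-type identities recorded in the previous subsection, every quadratic form in $u$ splits across modes, the $k$-th mode being identified with the radial function $v_k$ on $\mathbb{R}^{N+2k}$; in particular, for any radial weight $P$,
\[
\int_{\mathbb{R}^N}|\nabla u|^2\,dx=\sum_{k=0}^\infty\int_{\mathbb{R}^{N+2k}}|\nabla v_k|^2\,dx,\qquad\int_{\mathbb{R}^N}P(|x|)|u|^2\,dx=\sum_{k=0}^\infty\int_{\mathbb{R}^{N+2k}}P(|x|)|v_k|^2\,dx.
\]
Consequently the deficit $\int_{\mathbb{R}^N}|\nabla u|^2\,dx-\int_{\mathbb{R}^N}A_N(|x|)|u|^2\,dx$ equals $\sum_{k\geq0}\big(\int_{\mathbb{R}^{N+2k}}|\nabla v_k|^2\,dx-\int_{\mathbb{R}^{N+2k}}A_N(|x|)|v_k|^2\,dx\big)$.

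I would then estimate each mode separately. The $k=0$ summand is precisely the left-hand side of \eqref{R} for the radial function $v_0$, so applying the hypothesis \eqref{R} in dimension $N$ bounds it below by
\[
C_N\left(\int_{\mathbb{R}^N}W_N|v_0|^2\,dx-\frac{\left(\int_{\mathbb{R}^N}v_0 B_N W_N\,dx\right)^2}{\int_{\mathbb{R}^N}B_N^2 W_N\,dx}\right).
\]
For $k\geq1$, the pointwise assumption $A_{N+2k}(r)\geq A_N(r)+C_N W_N(r)$ lets me replace $A_N$ by $A_{N+2k}$ while gaining the term $C_N\int_{\mathbb{R}^{N+2k}}W_N|v_k|^2\,dx$, after which the assumed radial inequality $\int_{\mathbb{R}^{N+2k}}|\nabla v_k|^2\,dx\geq\int_{\mathbb{R}^{N+2k}}A_{N+2k}|v_k|^2\,dx$ in dimension $N+2k\geq N_0$ discards the remaining nonnegative quantity. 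This yields, for every $k\geq1$, the estimate $\int_{\mathbb{R}^{N+2k}}|\nabla v_k|^2\,dx-\int_{\mathbb{R}^{N+2k}}A_N|v_k|^2\,dx\geq C_N\int_{\mathbb{R}^{N+2k}}W_N|v_k|^2\,dx$.

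Summing the mode estimates and recombining then closes the proof: because the radial weight $B_N W_N$ annihilates all non-radial modes, one has $\int_{\mathbb{R}^N}u B_N W_N\,dx=\int_{\mathbb{R}^N}v_0 B_N W_N\,dx$, so the full projection correction is supplied by the $k=0$ term alone, while $\sum_{k\geq0}\int_{\mathbb{R}^{N+2k}}W_N|v_k|^2\,dx=\int_{\mathbb{R}^N}|u|^2 W_N\,dx$ reassembles the weighted mass; the resulting sum reproduces exactly the right-hand side of \eqref{G}. The step I expect to require the most care is precisely this bookkeeping, namely verifying that the fraction term couples only to the radial ($k=0$) mode whereas every higher mode contributes the bare weight $\int W_N|v_k|^2$. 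This is the mechanism by which \eqref{R} applied to the radial part absorbs the projection, while the potential gap $A_{N+2k}-A_N\geq C_N W_N$ absorbs the non-radial modes, with one and the same constant $C_N$ throughout.
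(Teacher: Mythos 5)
Your proposal is correct and follows essentially the same route as the paper's own proof: the same spherical harmonics decomposition $u=\sum_k r^k v_k(r)\phi_k(\sigma)$ with each mode viewed as a radial function on $\mathbb{R}^{N+2k}$, the same mode-by-mode estimates (applying \eqref{R} to the $k=0$ mode, and for $k\geq1$ combining the radial Hardy inequality in dimension $N+2k$ with the pointwise gap $A_{N+2k}\geq A_N+C_N W_N$), and the same observation that the weight $B_N W_N$ couples only to the radial mode. The bookkeeping you flag as delicate is exactly what the paper verifies, so nothing is missing.
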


\begin{proof}
Let $u\in C_{0}^{\infty}(\mathbb{R}^{N})$. Using the spherical harmonics
decomposition, we can represent $u$ in the form
\[
u(x)=u(r\sigma)=\sum_{k=0}^{\infty}u_{k}(r)\phi_{k}(\sigma)=\sum_{k=0}%
^{\infty}v_{k}(r)r^{k}\phi_{k}(\sigma).
\]
Then
\begin{align*}
\int_{\mathbb{R}^{N}}|\nabla u|^{2}dx  &  =\sum_{k=0}^{\infty}\int
_{\mathbb{R}^{N+2k}}|\nabla v_{k}(\left\vert x\right\vert )|^{2}dx\\
\int_{\mathbb{R}^{N}}A_{N}\left(  \left\vert x\right\vert \right)  \left\vert
u\right\vert ^{2}dx  &  =\sum_{k=0}^{\infty}\int_{\mathbb{R}^{N+2k}}%
A_{N}\left(  \left\vert x\right\vert \right)  |v_{k}|^{2}dx\\
\int_{\mathbb{R}^{N}}W_{N}\left(  \left\vert x\right\vert \right)  \left\vert
u\right\vert ^{2}dx  &  =\sum_{k=0}^{\infty}\int_{\mathbb{R}^{N+2k}}%
W_{N}\left(  \left\vert x\right\vert \right)  |v_{k}|^{2}dx
\end{align*}
and
\[
\int_{\mathbb{R}^{N}}uB_{N}\left(  \left\vert x\right\vert \right)
W_{N}\left(  \left\vert x\right\vert \right)  dx=\int_{\mathbb{R}^{N}}%
v_{0}B_{N}\left(  \left\vert x\right\vert \right)  W_{N}\left(  \left\vert
x\right\vert \right)  dx.
\]
Therefore, to prove (\ref{G}), it is enough to show that%
\begin{align*}
&  \int_{\mathbb{R}^{N}}|\nabla v_{0}|^{2}dx-\int_{\mathbb{R}^{N}}A_{N}\left(
\left\vert x\right\vert \right)  |v_{0}|^{2}dx\\
&  \geq C_{N}\left(  \int_{\mathbb{R}^{N}}|v_{0}|^{2}W_{N}\left(  \left\vert
x\right\vert \right)  dx-\frac{\left(  \int_{\mathbb{R}^{N}}v_{0}B_{N}\left(
\left\vert x\right\vert \right)  W_{N}\left(  \left\vert x\right\vert \right)
dx\right)  ^{2}}{\int_{\mathbb{R}^{N}}B_{N}^{2}\left(  \left\vert x\right\vert
\right)  W_{N}\left(  \left\vert x\right\vert \right)  dx}\right)
\end{align*}
and
\[
\int_{\mathbb{R}^{N+2k}}|\nabla v_{k}(\left\vert x\right\vert )|^{2}%
dx-\int_{\mathbb{R}^{N+2k}}A_{N}\left(  \left\vert x\right\vert \right)
|v_{k}|^{2}dx\geq C_{N}\int_{\mathbb{R}^{N+2k}}|v_{k}|^{2}W_{N}\left(
\left\vert x\right\vert \right)  dx.
\]
Note that $v_{0}$ is radial and that
\[
\int_{\mathbb{R}^{N+2k}}|\nabla v_{k}(\left\vert x\right\vert )|^{2}dx\geq
\int_{\mathbb{R}^{N+2k}}A_{N+2k}\left(  \left\vert x\right\vert \right)
|v_{k}|^{2}dx\geq\int_{\mathbb{R}^{N+2k}}\left(  A_{N}\left(  \left\vert
x\right\vert \right)  +C_{N}W_{N}\right)  |v_{k}|^{2}dx.
\]

\end{proof}

\begin{proof}
[Proof of Theorem \ref{A}]Denote
\[
\theta_{3}\left(  u\right)  :=\int_{\mathbb{R}^{N}}|\nabla u|^{2}%
dx+\int_{\mathbb{R}^{N}}\left\vert x\right\vert ^{2}\left\vert u\right\vert
^{2}dx-N\int_{\mathbb{R}^{N}}\left\vert u\right\vert ^{2}dx.
\]
Apply the above theorem for $A_{N}\left(  r\right)  =\left(  N-r^{2}\right)
$, $B_{N}\left(  r\right)  =e^{-\frac{1}{2}r^{2}}$, $W_{N}\left(  r\right)
=1$, we have that
\[
\theta_{3}\left(  u\right)  \geq C\left[  \int_{\mathbb{R}^{N}}\left\vert
u\right\vert ^{2}dx-\frac{\left(  \int_{\mathbb{R}^{N}}ue^{-\frac{1}%
{2}\left\vert x\right\vert ^{2}}dx\right)  ^{2}}{\int_{\mathbb{R}^{N}%
}e^{-\left\vert x\right\vert ^{2}}dx}\right]  \text{ }\forall u\in
C_{0}^{\infty}(\mathbb{R}^{N})
\]
if and only if
\[
\theta_{3}\left(  u\right)  \geq C\left[  \int_{\mathbb{R}^{N}}\left\vert
u\right\vert ^{2}dx-\frac{\left(  \int_{\mathbb{R}^{N}}ue^{-\frac{1}%
{2}\left\vert x\right\vert ^{2}}dx\right)  ^{2}}{\int_{\mathbb{R}^{N}%
}e^{-\left\vert x\right\vert ^{2}}dx}\right]  \text{ }\forall u\in
C_{0,\text{radial}}^{\infty}(\mathbb{R}^{N}).
\]

Let $u\in C_{0}^{\infty}(\mathbb{R}^{N})$. Using the spherical harmonics
decomposition, we can represent $u$ in the form
\[
u(x)=u(r\sigma)=\sum_{k=0}^{\infty}u_{k}(r)\phi_{k}(\sigma)=\sum_{k=0}%
^{\infty}v_{k}(r)r^{k}\phi_{k}(\sigma).
\]
Then
\begin{align*}
\int_{\mathbb{R}^{N}}|\nabla u|^{2}dx  &  =\sum_{k=0}^{\infty}\int
_{\mathbb{R}^{N+2k}}|\nabla v_{k}(\left\vert x\right\vert )|^{2}dx\\
\int_{\mathbb{R}^{N}}\left\vert x\right\vert ^{2}\left\vert u\right\vert
^{2}dx  &  =\sum_{k=0}^{\infty}\int_{\mathbb{R}^{N+2k}}\left\vert x\right\vert
^{2}|v_{k}|^{2}dx\\
\int_{\mathbb{R}^{N}}\left\vert u\right\vert ^{2}dx  &  =\sum_{k=0}^{\infty
}\int_{\mathbb{R}^{N+2k}}|v_{k}|^{2}dx.
\end{align*}
Also,%
\begin{align*}
\inf_{c}\int_{\mathbb{R}^{N}}\left\vert u-ce^{-\frac{1}{2}\left\vert
x\right\vert ^{2}}\right\vert ^{2}dx  &  =\int_{\mathbb{R}^{N}}\left\vert
u\right\vert ^{2}dx-\frac{\left(  \int_{\mathbb{R}^{N}}ue^{-\frac{1}%
{2}\left\vert x\right\vert ^{2}}dx\right)  ^{2}}{\int_{\mathbb{R}^{N}%
}e^{-\left\vert x\right\vert ^{2}}dx}\\
&  =\sum_{k=0}^{\infty}\int_{\mathbb{R}^{N+2k}}|v_{k}|^{2}dx-\frac{\left(
\int_{\mathbb{R}^{N}}v_{0}e^{-\frac{1}{2}\left\vert x\right\vert ^{2}%
}dx\right)  ^{2}}{\int_{\mathbb{R}^{N}}e^{-\left\vert x\right\vert ^{2}}dx}.
\end{align*}
Therefore, to find the largest positive constant $C$ such that
\[
\theta_{3}\left(  u\right)  \geq C\left[  \sum_{k=0}^{\infty}\int
_{\mathbb{R}^{N+2k}}|v_{k}|^{2}dx-\frac{\left(  \int_{\mathbb{R}^{N}}%
v_{0}e^{-\frac{1}{2}\left\vert x\right\vert ^{2}}dx\right)  ^{2}}%
{\int_{\mathbb{R}^{N}}e^{-\left\vert x\right\vert ^{2}}dx}\right]  ,
\]
it is enough to find $C$ such that%
\begin{equation}
\int_{\mathbb{R}^{N}}|\nabla v_{0}|^{2}dx+\int_{\mathbb{R}^{N}}\left\vert
x\right\vert ^{2}|v_{0}|^{2}dx-N\int_{\mathbb{R}^{N}}|v_{0}|^{2}dx\geq
C\left[  \int_{\mathbb{R}^{N}}|v_{0}|^{2}dx-\frac{\left(  \int_{\mathbb{R}%
^{N}}v_{0}e^{-\frac{1}{2}\left\vert x\right\vert ^{2}}dx\right)  ^{2}}%
{\int_{\mathbb{R}^{N}}e^{-\left\vert x\right\vert ^{2}}dx}\right]  \label{P0}%
\end{equation}
and
\begin{equation}
\int_{\mathbb{R}^{N+2k}}|\nabla v_{k}|^{2}dx+\int_{\mathbb{R}^{N+2k}%
}\left\vert x\right\vert ^{2}|v_{k}|^{2}dx-N\int_{\mathbb{R}^{N+2k}}%
|v_{k}|^{2}dx\geq C\int_{\mathbb{R}^{N+2k}}|v_{k}|^{2}dx\text{ }\forall
k\geq1\text{.} \label{Pk}%
\end{equation}
Note that we can rewrite (\ref{Pk}) as
\[
\int_{\mathbb{R}^{N+2k}}|\nabla v_{k}|^{2}dx+\int_{\mathbb{R}^{N+2k}%
}\left\vert x\right\vert ^{2}|v_{k}|^{2}dx\geq\left(  N+C\right)
\int_{\mathbb{R}^{N+2k}}|v_{k}|^{2}dx\text{ }\forall k\geq1\text{.}%
\]
Obviously, by the sharp HUP%
\[
\int_{\mathbb{R}^{N+2k}}|\nabla v_{k}|^{2}dx+\int_{\mathbb{R}^{N+2k}%
}\left\vert x\right\vert ^{2}|v_{k}|^{2}dx\geq\left(  N+2k\right)
\int_{\mathbb{R}^{N+2k}}|v_{k}|^{2}dx\text{ }\forall k\geq0\text{,}%
\]
we must have $C\leq2$. Now, we will show that for radial function $v\in
C_{0}^{\infty}(\mathbb{R}^{N}):$
\begin{equation}
\int_{\mathbb{R}^{N}}|\nabla v|^{2}dx+\int_{\mathbb{R}^{N}}\left\vert
x\right\vert ^{2}|v|^{2}dx-N\int_{\mathbb{R}^{N}}|v|^{2}dx\geq2\left[
\int_{\mathbb{R}^{N}}|v|^{2}dx-\frac{\left(  \int_{\mathbb{R}^{N}}%
ve^{-\frac{1}{2}\left\vert x\right\vert ^{2}}dx\right)  ^{2}}{\int
_{\mathbb{R}^{N}}e^{-\left\vert x\right\vert ^{2}}dx}\right]  . \label{P01}%
\end{equation}
Equivalently%
\[
\int_{\mathbb{R}^{N}}\left\vert \nabla\left(  ve^{\frac{1}{2}\left\vert
x\right\vert ^{2}}\right)  \right\vert e^{-\left\vert x\right\vert ^{2}}%
dx\geq2\left[  \int_{\mathbb{R}^{N}}\left\vert ve^{\frac{1}{2}\left\vert
x\right\vert ^{2}}\right\vert ^{2}e^{-\left\vert x\right\vert ^{2}}%
dx-\frac{\left(  \int_{\mathbb{R}^{N}}ve^{\frac{1}{2}\left\vert x\right\vert
^{2}}e^{-\left\vert x\right\vert ^{2}}dx\right)  ^{2}}{\int_{\mathbb{R}^{N}%
}e^{-\left\vert x\right\vert ^{2}}dx}\right]
\]
which is just the sharp Poincare inequality for radial functions.

In summary, we get that
\[
\theta_{3}\left(  u\right)  \geq2\left[  \int_{\mathbb{R}^{N}}\left\vert
u\right\vert ^{2}dx-\frac{\left(  \int_{\mathbb{R}^{N}}ue^{-\frac{1}%
{2}\left\vert x\right\vert ^{2}}dx\right)  ^{2}}{\int_{\mathbb{R}^{N}%
}e^{-\left\vert x\right\vert ^{2}}dx}\right]  \text{ }\forall u\in
C_{0}^{\infty}(\mathbb{R}^{N}).
\]

Now by applying the standard scaling argument, we obtain
\[
\left(  \int_{\mathbb{R}^{N}}\left\vert \nabla v\right\vert ^{2}dx\right)
^{\frac{1}{2}}\left(  \int_{\mathbb{R}^{N}}\left\vert v\right\vert
^{2}\left\vert x\right\vert ^{2}dx\right)  ^{\frac{1}{2}}-\frac{N}{2}%
\int_{\mathbb{R}^{N}}\left\vert v\right\vert ^{2}dx\geq d_{1}^{2}(v,E_{HUP}).
\]

\end{proof}

\subsection{ Proofs of Theorem \ref{T3} and Theorem \ref{T4}}

\begin{proof}
[Proof of Theorem \ref{T3}]Since the smooth field $\mathbf{U}$ is
conservative, we have $\mathbf{U}=\nabla u$ for some smooth function $u$. Then
$\delta_{3}(\mathbf{U})=\delta_{1}(u)$. Therefore
\[
\delta_{3}(\mathbf{U})=\delta_{1}(u)\geq\dfrac{C\left(  N\right)  }{2}%
\inf_{u^{\ast}\in E_{SHUP}}\left\Vert \nabla u\mathbf{-}\nabla u^{\ast
}\right\Vert _{2}^{2}=\dfrac{C\left(  N\right)  }{2}\inf_{\mathbf{U}^{\ast}\in
E_{CFHUP}}\left\Vert \mathbf{U-U}^{\ast}\right\Vert _{2}^{2}%
\]
and%
\begin{align*}
\delta_{3}(\mathbf{U})  &  =\delta_{1}(u)\geq\dfrac{C\left(  N\right)  }%
{4}\inf\left\{  \left\Vert \nabla u\mathbf{-}\nabla u^{\ast}\right\Vert
_{2}^{2}:u^{\ast}\in E_{SHUP}\text{ and }\left\Vert \nabla u\right\Vert
_{2}^{2}=\left\Vert \nabla u^{\ast}\right\Vert _{2}^{2}\right\} \\
&  =\dfrac{C\left(  N\right)  }{4}\inf\left\{  \left\Vert \mathbf{U-U}^{\ast
}\right\Vert _{2}^{2}:\mathbf{U}^{\ast}\in E_{CFHUP}\text{ and }\left\Vert
\mathbf{U}\right\Vert _{2}^{2}=\left\Vert \mathbf{U}^{\ast}\right\Vert
_{2}^{2}\right\}  .
\end{align*}

\end{proof}

\begin{proof}
[Proof of Theorem \ref{T4}]By Theorem \ref{T2}, we have
\[
\delta_{2}(u)\geq C\left(  N\right)  \inf_{c}\int_{\mathbb{R}^{N}}\left\vert
\nabla\left(  u-ce^{-\left\vert x\right\vert ^{2}/2}\right)  \right\vert
^{2}dx.
\]
Note that
\[
\delta_{2}(u)=\int_{\mathbb{R}^{N}}\left\Vert \nabla^{2}v-x\otimes\nabla
v\right\Vert _{HS}^{2}e^{-\left\vert x\right\vert ^{2}}dx
\]
with $v=ue^{\frac{\left\vert x\right\vert ^{2}}{2}\text{ }}$. Therefore%
\begin{align*}
&  \int_{\mathbb{R}^{N}}\left\Vert \nabla^{2}v-x\otimes\nabla v\right\Vert
_{HS}^{2}e^{-\left\vert x\right\vert ^{2}}dx\\
&  \geq C\left(  N\right)  \inf_{c}\int_{\mathbb{R}^{N}}\left\vert
\nabla\left(  u-ce^{-\left\vert x\right\vert ^{2}/2}\right)  \right\vert
^{2}dx\\
&  =C\left(  N\right)  \inf_{c}\int_{\mathbb{R}^{N}}\left\vert \nabla\left(
ve^{-\left\vert x\right\vert ^{2}/2}-ce^{-\left\vert x\right\vert ^{2}%
/2}\right)  \right\vert ^{2}dx\\
&  =C\left(  N\right)  \inf_{c}\int_{\mathbb{R}^{N}}\left\vert \nabla\left(
\left(  v-c\right)  e^{-\left\vert x\right\vert ^{2}/2}\right)  \right\vert
^{2}dx\\
&  =C\left(  N\right)  \inf_{c}\int_{\mathbb{R}^{N}}\left\vert \nabla\left(
v-c\right)  e^{-\left\vert x\right\vert ^{2}/2}+\left(  v-c\right)
\nabla\left(  e^{-\left\vert x\right\vert ^{2}/2}\right)  \right\vert ^{2}dx\\
&  =C\left(  N\right)  \inf_{c}\int_{\mathbb{R}^{N}}\left\vert \nabla
v-\left(  v-c\right)  x\right\vert ^{2}e^{-\left\vert x\right\vert ^{2}}dx.
\end{align*}

\end{proof}

\section{The stability of the second-order HUP-Proofs of Theorem \ref{L1},
Theorem \ref{T2}, Theorem \ref{T1} and Theorem \ref{T2.1}}

\begin{proof}
[Proof of Theorem \ref{L1}]$C\left(  N,0\right)  =0$ is a consequence of the
second order Heisenberg Uncertainty Principle on $\mathbb{R}^{N}$ proved in
\cite{CFL22}. Similarly, by second order Heisenberg Uncertainty Principle on
$\mathbb{R}^{N+2k}$, we have
\begin{align*}
C\left(  N,k\right)   &  =\inf_{u\text{ is radial}}\frac{\int_{\mathbb{R}%
^{N+2k}}|\Delta u|^{2}dx+\int_{\mathbb{R}^{N+2k}}\left[  |\nabla
u|^{2}\left\vert x\right\vert ^{2}-2k|u|^{2}\right]  dx}{\int_{\mathbb{R}%
^{N+2k}}|\nabla u|^{2}dx}-\left(  N+2\right) \\
&  \leq\inf_{u\text{ is radial}}\frac{\int_{\mathbb{R}^{N+2k}}|\Delta
u|^{2}dx+\int_{\mathbb{R}^{N+2k}}|\nabla u|^{2}\left\vert x\right\vert ^{2}%
dx}{\int_{\mathbb{R}^{N+2k}}|\nabla u|^{2}dx}-\left(  N+2\right) \\
&  =N+2k+2-\left(  N+2\right)  =2k.
\end{align*}
Now, we will show that $C\left(  N,k\right)  \geq\sqrt{\left(  N+2k\right)
^{2}-8k}-N$ $\forall k\geq1$. Therefore, $C\left(  N\right)  \geq\min_{k\geq
1}\left(  -N+\sqrt{\left(  N+2k\right)  ^{2}-8k}\right)  =K\left(  N\right)
$. Indeed, we have%
\begin{align*}
&  \int_{\mathbb{R}^{N+2k}}|\Delta u(\left\vert x\right\vert )|^{2}%
dx+\int_{\mathbb{R}^{N+2k}}\left[  |\nabla u(\left\vert x\right\vert
)|^{2}\left\vert x\right\vert ^{2}-2k|u(\left\vert x\right\vert )|^{2}\right]
dx\\
&  -(N+2+K)\int_{\mathbb{R}^{N+2k}}|\nabla u(\left\vert x\right\vert
)|^{2}dx\\
&  =|\mathbb{S}_{N+2k-1}|\int_{0}^{\infty}\left(  \left\vert u^{\prime\prime
}+\dfrac{N+2k-1}{r}u^{\prime}\right\vert ^{2}r^{N+2k-1}+\left\vert u^{\prime
}\right\vert ^{2}r^{N+2k+1}\right. \\
&  -\left.  (N+2+K)\left\vert u^{\prime}\right\vert ^{2}r^{N+2k-1}%
-2k\left\vert u\right\vert ^{2}r^{N+2k-1}\right)  dr\\
&  =|\mathbb{S}_{N+2k-1}|\int_{0}^{\infty}\left\vert u^{\prime\prime
}r+(N+2k-1)u^{\prime}+u^{\prime}r^{2}+\frac{2N+2k+K}{2}ur\right\vert
^{2}r^{N+2k-3}dr\\
&  +\left(  (N+2k)\frac{2N+2k+K}{2}-\left(  \frac{2N+2k+K}{2}\right)
^{2}-2k\right)  |\mathbb{S}_{N+2k-1}|\int_{0}^{\infty}\left\vert u\right\vert
^{2}r^{N+2k-1}dr.
\end{align*}
Therefore, we would like to choose $K>0$ be such that $\left(  \frac
{2N+2k+K}{2}\right)  ^{2}-(N+2k)\frac{2N+2k+K}{2}+2k\leq0$. Equivalently,
\[
K^{2}+2NK+\left(  4\left(  2-N\right)  k-4k^{2}\right)  \leq0\text{.}%
\]
That is $0<K\leq\sqrt{\left(  N+2k\right)  ^{2}-8k}-N$.

Next, by testing the constant $C\left( N,1\right)$ with the radial function $U(x)=e^{-\frac{|x|^2}{2}}$, we deduce that 
$$C\left( N,1\right) \leq \frac{2N}{N+2}<\sqrt{(N+2k)^2-8k}-N \leq C\left( N,k\right) ~\forall k\geq 2.$$ 
Therefore $C(N)=C(N,1)$.
\end{proof}

\begin{proof}
[Proof of Theorem \ref{T2}]Our goal is to determine the constant $K>0$ such
that%
\begin{equation}
\int_{\mathbb{R}^{N}}|\Delta u|^{2}dx+\int_{\mathbb{R}^{N}}\left\vert
x\right\vert ^{2}|\nabla u|^{2}dx-(N+2)\int_{\mathbb{R}^{N}}|\nabla
u|^{2}dx\geq K\inf_{c}\int_{\mathbb{R}^{N}}\left\vert \nabla\left(
u-ce^{-\left\vert x\right\vert ^{2}/2}\right)  \right\vert ^{2}dx.
\label{2ndOrdStab}%
\end{equation}
From the computations in the previous section, we have the following
identity:
\begin{align*}
&  \int_{\mathbb{R}^{N}}|\Delta u|^{2}dx+\int_{\mathbb{R}^{N}}\left\vert
x\right\vert ^{2}|\nabla u|^{2}dx-(N+2)\int_{\mathbb{R}^{N}}|\nabla u|^{2}dx\\
&  =\sum_{k=0}^{\infty}\left(  \int_{\mathbb{R}^{N+2k}}|\Delta v_{k}%
(\left\vert x\right\vert )|^{2}dx+\int_{\mathbb{R}^{N+2k}}\left[  |\nabla
v_{k}(\left\vert x\right\vert )|^{2}\left\vert x\right\vert ^{2}%
-2k|v_{k}(\left\vert x\right\vert )|^{2}\right]  dx\right. \\
&  \left.  -(N+2)\int_{\mathbb{R}^{N+2k}}|\nabla v_{k}(\left\vert x\right\vert
)|^{2}dx\right)  ,
\end{align*}
where $u(x)=u(r\sigma)=\sum_{k=0}^{\infty}v_{k}(r)r^{k}\phi_{k}(\sigma
).$\newline Moreover, we also have
\begin{align*}
\inf_{c}\int_{\mathbb{R}^{N}}\left\vert \nabla\left(  u-ce^{-\left\vert
x\right\vert ^{2}/2}\right)  \right\vert ^{2}dx  &  =\int_{\mathbb{R}^{N}%
}\left\vert \nabla u-\left(  \dfrac{\int_{\mathbb{R}^{N}}\nabla u\cdot
\nabla\left(  e^{-\left\vert x\right\vert ^{2}/2}\right)  dx}{\int
_{\mathbb{R}^{N}}\left\vert \nabla\left(  e^{-\left\vert x\right\vert ^{2}%
/2}\right)  \right\vert ^{2}dx}\right)  \nabla\left(  e^{-\left\vert
x\right\vert ^{2}/2}\right)  \right\vert ^{2}dx\\
&  =\int_{\mathbb{R}^{N}}\left\vert \nabla u\right\vert ^{2}dx-\dfrac{\left(
\int_{\mathbb{R}^{N}}\nabla u\cdot\nabla\left(  e^{-\left\vert x\right\vert
^{2}/2}\right)  dx\right)  ^{2}}{\int_{\mathbb{R}^{N}}\left\vert \nabla\left(
e^{-\left\vert x\right\vert ^{2}/2}\right)  \right\vert ^{2}dx}.
\end{align*}
We note that
\begin{align*}
\int_{\mathbb{R}^{N}}\nabla u\cdot\nabla\left(  e^{-\left\vert x\right\vert
^{2}/2}\right)  dx  &  =-\int_{\mathbb{R}^{N}}u\Delta\left(  e^{-\left\vert
x\right\vert ^{2}/2}\right)  dx\\
&  =-\int_{\mathbb{R}^{N}}\left(  \sum_{k=0}^{\infty}v_{k}(\left\vert
x\right\vert )\left\vert x\right\vert ^{k}\phi_{k}\left(  \frac{x}{\left\vert
x\right\vert }\right)  \right)  \Delta\left(  e^{-\left\vert x\right\vert
^{2}/2}\right)  dx\\
&  =-\sum_{k=0}^{\infty}\int_{\mathbb{R}^{N}}v_{k}(\left\vert x\right\vert
)\left\vert x\right\vert ^{k}\phi_{k}\left(  \frac{x}{\left\vert x\right\vert
}\right)  \Delta\left(  e^{-\left\vert x\right\vert ^{2}/2}\right)  dx\\
&  =-\int_{\mathbb{R}^{N}}v_{0}(\left\vert x\right\vert )\phi_{0}\left(
\frac{x}{\left\vert x\right\vert }\right)  \Delta\left(  e^{-\left\vert
x\right\vert ^{2}/2}\right)  dx\\
&  =\int_{\mathbb{R}^{N}}\nabla v_{0}(\left\vert x\right\vert )\cdot
\nabla\left(  e^{-\left\vert x\right\vert ^{2}/2}\right)  dx,
\end{align*}
since $\displaystyle\int_{\mathbb{S}^{N-1}}\phi_{k}(\sigma)d\sigma=0$ for all
$k\geq1$, and $\phi_{0}(\sigma)\equiv1$. Therefore, \eqref{2ndOrdStab} is
equivalent to
\begin{align*}
&  \sum_{k=0}^{\infty}\left(  \int_{\mathbb{R}^{N+2k}}|\Delta v_{k}(\left\vert
x\right\vert )|^{2}dx+\int_{\mathbb{R}^{N+2k}}\left[  |\nabla v_{k}(\left\vert
x\right\vert )|^{2}\left\vert x\right\vert ^{2}-2k|v_{k}(\left\vert
x\right\vert )|^{2}\right]  dx\right. \\
&  \left.  -(N+2+K)\int_{\mathbb{R}^{N+2k}}|\nabla v_{k}(\left\vert
x\right\vert )|^{2}dx\right)  +K\dfrac{\left(  \int_{\mathbb{R}^{N}}\nabla
v_{0}(\left\vert x\right\vert )\cdot\nabla\left(  e^{-\left\vert x\right\vert
^{2}/2}\right)  dx\right)  ^{2}}{\int_{\mathbb{R}^{N}}\left\vert \nabla\left(
e^{-\left\vert x\right\vert ^{2}/2}\right)  \right\vert ^{2}dx}\geq0.
\end{align*}
Equivalently, we need to find the largest value of $K$ such that%
\begin{align}
\int_{\mathbb{R}^{N}}|\Delta v_{0}(\left\vert x\right\vert )|^{2}%
dx+\int_{\mathbb{R}^{N}}|\nabla v_{0}(\left\vert x\right\vert )|^{2}\left\vert
x\right\vert ^{2}dx-(N+2+K)\int_{\mathbb{R}^{N}}|\nabla v_{0}(\left\vert
x\right\vert )|^{2}dx  & \nonumber\\
+K\dfrac{\left(  \int_{\mathbb{R}^{N}}\nabla v_{0}(\left\vert x\right\vert
)\cdot\nabla\left(  e^{-\left\vert x\right\vert ^{2}/2}\right)  dx\right)
^{2}}{\int_{\mathbb{R}^{N}}\left\vert \nabla\left(  e^{-\left\vert
x\right\vert ^{2}/2}\right)  \right\vert ^{2}dx}  &  \geq0 \label{N1}%
\end{align}
and%
\begin{align}
&  \int_{\mathbb{R}^{N+2k}}|\Delta v_{k}(\left\vert x\right\vert )|^{2}%
dx+\int_{\mathbb{R}^{N+2k}}\left[  |\nabla v_{k}(\left\vert x\right\vert
)|^{2}\left\vert x\right\vert ^{2}-2k|v_{k}(\left\vert x\right\vert
)|^{2}\right]  dx\label{N2}\\
&  -(N+2+K)\int_{\mathbb{R}^{N+2k}}|\nabla v_{k}(\left\vert x\right\vert
)|^{2}dx\geq0\text{ }\forall k\geq1\text{.}\nonumber
\end{align}
To deal with (\ref{N1}), we set $v(\left\vert x\right\vert )=\frac
{v_{0}^{\prime}(\left\vert x\right\vert )}{\left\vert x\right\vert }$. Then we
have that from Lemma \ref{T3.1}, \eqref{N1} is equivalent to
\begin{align*}
&  \int_{\mathbb{R}^{N+2}}|\nabla v(\left\vert x\right\vert )|^{2}%
dx+\int_{\mathbb{R}^{N+2}}\left\vert x\right\vert ^{2}|v(\left\vert
x\right\vert )|^{2}dx-(N+2)\int_{\mathbb{R}^{N+2}}|v(\left\vert x\right\vert
)|^{2}dx\\
&  \geq K\left(  \int_{\mathbb{R}^{N+2}}|v(\left\vert x\right\vert
)|^{2}dx-\dfrac{\left(  \int_{\mathbb{R}^{N+2}}v(\left\vert x\right\vert
)e^{-\left\vert x\right\vert ^{2}/2}dx\right)  ^{2}}{\int_{\mathbb{R}^{N+2}%
}e^{-\left\vert x\right\vert ^{2}}dx}\right)  .
\end{align*}
Moreover,
\[
\int_{\mathbb{R}^{N+2}}|v(\left\vert x\right\vert )|^{2}dx-\dfrac{\left(
\int_{\mathbb{R}^{N+2}}v(\left\vert x\right\vert )e^{-\left\vert x\right\vert
^{2}/2}dx\right)  ^{2}}{\int_{\mathbb{R}^{N+2}}e^{-\left\vert x\right\vert
^{2}}dx}=\inf_{c}\int_{\mathbb{R}^{N+2}}\left\vert v-ce^{-\left\vert
x\right\vert ^{2}/2}\right\vert ^{2}dx.
\]
Therefore, by Theorem \ref{A},
\[
0<K\leq2.
\]
Also, (\ref{N2}) is equivalent to
\[
K\leq C\left(  N,k\right)  .
\]
Therefore
\[
\delta_{2}(u)\geq K\inf_{u^{\ast}\in E_{SHUP}}\left\Vert \nabla\left(
u-u^{\ast}\right)  \right\Vert _{2}^{2}%
\]
with
\[
K=\min_{k\geq1}\left\{  C\left(  N,k\right)  ,2\right\}  =C\left(  N\right)
\geq\dfrac{4N-4}{\sqrt{N^{2}+4N-4}+N}.
\]

Now, we will show that $K=C\left(  N\right)  $ is optimal in
\eqref{2ndOrdStab}. Indeed, assume by contradiction that there exists
$K>C\left(  N\right)  $ such that
\[
\int_{\mathbb{R}^{N}}|\Delta u|^{2}dx+\int_{\mathbb{R}^{N}}\left\vert
x\right\vert ^{2}|\nabla u|^{2}dx-(N+2)\int_{\mathbb{R}^{N}}|\nabla
u|^{2}dx\geq K\inf_{c}\int_{\mathbb{R}^{N}}\left\vert \nabla\left(
u-ce^{-\left\vert x\right\vert ^{2}/2}\right)  \right\vert ^{2}dx.
\]
Let $\varepsilon=\frac{K-C\left(  N\right)  }{2}>0$. Then we can find
$m\in\mathbb{N}$ such that
\begin{align*}
C\left(  N,m\right)   &  =\inf_{u\text{ is radial}}\frac{\int_{\mathbb{R}%
^{N+2m}}|\Delta u|^{2}dx+\int_{\mathbb{R}^{N+2m}}\left[  |\nabla
u|^{2}\left\vert x\right\vert ^{2}-2m|u|^{2}\right]  dx}{\int_{\mathbb{R}%
^{N+2m}}|\nabla u|^{2}dx}-\left(  N+2\right) \\
&  <C\left(  N\right)  +\varepsilon\text{.}%
\end{align*}
Moreover, there exists a sequence $v_{j}(r)$ such that $\int_{\mathbb{R}%
^{N+2m}}|\nabla v_{j}|^{2}dx=1$ and
\[
\int_{\mathbb{R}^{N+2m}}|\Delta v_{j}|^{2}dx+\int_{\mathbb{R}^{N+2m}}\left[
|\nabla v_{j}|^{2}\left\vert x\right\vert ^{2}-2m|v_{j}|^{2}\right]
dx-\left(  N+2\right)  \downarrow C\left(  N,m\right)  \text{ as }%
j\rightarrow\infty\text{.}%
\]
Now, if we choose $u_{j}=v_{j}(r)r^{m}\phi_{m}(\sigma)$, then by the above
computations, we have that as $j\rightarrow\infty:$%
\begin{align*}
&  \int_{\mathbb{R}^{N}}|\Delta u_{j}|^{2}dx+\int_{\mathbb{R}^{N}}\left\vert
x\right\vert ^{2}|\nabla u_{j}|^{2}dx-(N+2)\int_{\mathbb{R}^{N}}|\nabla
u_{j}|^{2}dx-K\inf_{c}\int_{\mathbb{R}^{N}}\left\vert \nabla\left(
u_{j}-ce^{-\left\vert x\right\vert ^{2}/2}\right)  \right\vert ^{2}dx\\
&  =\int_{\mathbb{R}^{N+2m}}|\Delta v_{j}|^{2}dx+\int_{\mathbb{R}^{N+2m}%
}\left[  |\nabla v_{j}|^{2}\left\vert x\right\vert ^{2}-2m|v_{j}|^{2}\right]
dx-\left(  N+2+K\right) \\
&  \rightarrow C\left(  N,m\right)  -K<C\left(  N\right)  +\varepsilon-K<0
\end{align*}

which is a contradiction.
\end{proof}

\begin{proof}
[Proof of Theorem \ref{T1}]Using $u_{\lambda}(x)=u(\lambda x)$ instead of $u$
in \eqref{2ndOrdStab}, we get
\begin{equation}
\int_{\mathbb{R}^{N}}|\Delta u_{\lambda}|^{2}dx+\int_{\mathbb{R}^{N}%
}\left\vert x\right\vert ^{2}|\nabla u_{\lambda}|^{2}dx-(N+2)\int
_{\mathbb{R}^{N}}|\nabla u_{\lambda}|^{2}dx\geq C\left(  N\right)  \inf
_{c}\int_{\mathbb{R}^{N}}\left\vert \nabla\left(  u_{\lambda}-ce^{-\left\vert
x\right\vert ^{2}/2}\right)  \right\vert ^{2}dx, \label{scal2ndOrdStab}%
\end{equation}
which is equivalent to
\begin{align*}
\lambda^{4-N}\int_{\mathbb{R}^{N}}|\Delta u|^{2}dx+\lambda^{-N}\int
_{\mathbb{R}^{N}}\left\vert x\right\vert ^{2}|\nabla u|^{2}dx  &
-(N+2)\lambda^{2-N}\int_{\mathbb{R}^{N}}|\nabla u|^{2}dx\\
&  \geq\lambda^{2-N}C\left(  N\right)  \inf_{c}\int_{\mathbb{R}^{N}}\left\vert
\nabla\left(  u-ce^{-\left\vert x\right\vert ^{2}/(2\lambda^{2})}\right)
\right\vert ^{2}dx.
\end{align*}
Then,
\begin{align*}
\lambda^{2}\int_{\mathbb{R}^{N}}|\Delta u|^{2}dx+\lambda^{-2}\int
_{\mathbb{R}^{N}}\left\vert x\right\vert ^{2}|\nabla u|^{2}dx  &
-(N+2)\int_{\mathbb{R}^{N}}|\nabla u|^{2}dx\\
&  \geq C\left(  N\right)  \inf_{c}\int_{\mathbb{R}^{N}}\left\vert
\nabla\left(  u-ce^{-\left\vert x\right\vert ^{2}/(2\lambda^{2})}\right)
\right\vert ^{2}dx.
\end{align*}
By choosing $\lambda=\left(  \dfrac{\int_{\mathbb{R}^{N}}\left\vert
x\right\vert ^{2}|\nabla u|^{2}dx}{\int_{\mathbb{R}^{N}}|\Delta u|^{2}%
dx}\right)  ^{1/4}$, we obtain
\begin{align*}
2\left(  \int_{\mathbb{R}^{N}}|\Delta u|^{2}dx\right)  ^{1/2}\left(
\int_{\mathbb{R}^{N}}\left\vert x\right\vert ^{2}|\nabla u|^{2}dx\right)
^{1/2}  &  -(N+2)\int_{\mathbb{R}^{N}}|\nabla u|^{2}dx\\
&  \geq C\left(  N\right)  \inf_{c}\int_{\mathbb{R}^{N}}\left\vert
\nabla\left(  u-ce^{-\left\vert x\right\vert ^{2}/(2\lambda^{2})}\right)
\right\vert ^{2}dx.
\end{align*}
Therefore,
\begin{align*}
\left(  \int_{\mathbb{R}^{N}}|\Delta u|^{2}dx\right)  ^{1/2}\left(
\int_{\mathbb{R}^{N}}\left\vert x\right\vert ^{2}|\nabla u|^{2}dx\right)
^{1/2}  &  -\dfrac{N+2}{2}\int_{\mathbb{R}^{N}}|\nabla u|^{2}dx\\
&  \geq\dfrac{C\left(  N\right)  }{2}\inf_{c}\int_{\mathbb{R}^{N}}\left\vert
\nabla\left(  u-ce^{-\left\vert x\right\vert ^{2}/(2\lambda^{2})}\right)
\right\vert ^{2}dx\\
&  \geq\dfrac{C\left(  N\right)  }{2}\inf_{u^{\ast}\in E_{SHUP}}\left\Vert
\nabla\left(  u-u^{\ast}\right)  \right\Vert _{2}^{2}.
\end{align*}
Since $C\left(  N\right)  $ is sharp in Theorem \ref{T2}, it is easy to see
that $\dfrac{C\left(  N\right)  }{2}$ is sharp.
\end{proof}

\begin{proof}
[Proof of Theorem \ref{T2.1}]We will use Theorem \ref{T1} to show that
\begin{equation}
\delta_{1}(u)\geq\dfrac{C\left(  N\right)  }{4}\inf\left\{  \left\Vert
\nabla\left(  u-u^{\ast}\right)  \right\Vert _{2}^{2}:u^{\ast}\in
E_{SHUP}\text{ and }\left\Vert \nabla u\right\Vert _{2}^{2}=\left\Vert \nabla
u^{\ast}\right\Vert _{2}^{2}\right\}  . \label{cdtStab}%
\end{equation}
Indeed, first of all, we can assume without loss of generality that
$\left\Vert \nabla u\right\Vert _{2}^{2}=1$ since there is nothing to do if
$\left\Vert \nabla u\right\Vert _{2}^{2}=0$. We will split into two cases:

\textbf{Case }$1$\textbf{:} $\delta_{1}(u)<\frac{C\left(  N\right)  }{2}$. In
this case, we first claim that there exists a function $v\in E_{SHUP}$ such
that
\[
\displaystyle\inf_{u^{\ast}\in E_{SHUP}}\Vert\nabla(u-u^{\ast})\Vert_{2}%
^{2}=\Vert\nabla(u-v)\Vert_{2}^{2}.
\]
Indeed, we have
\[
\displaystyle\inf_{u^{\ast}\in E_{SHUP}}\Vert\nabla(u-u^{\ast})\Vert_{2}%
^{2}\leq\dfrac{2}{C\left(  N\right)  }\delta_{1}(u)<1.
\]
Let $\{v_{j}\}_{j}$ be a sequence in $E_{SHUP}$ such that
\[
\lim_{j\rightarrow\infty}\Vert\nabla u-\nabla v_{j}\Vert_{2}^{2}%
=\displaystyle\inf_{u^{\ast}\in E}\Vert\nabla(u-u^{\ast})\Vert_{2}^{2}<1.
\]
Using the triangle inequality, we obtain
\[
\Vert\nabla(u-v_{j})\Vert_{2}\geq\left\vert \Vert\nabla u\Vert_{2}-\Vert
\nabla(v_{j})\Vert_{2}\right\vert =\left\vert 1-\Vert\nabla(v_{j})\Vert
_{2}\right\vert ,
\]
which implies that
\[
\lim_{j\rightarrow\infty}\left\vert 1-\Vert\nabla(v_{j})\Vert_{2}\right\vert
<1
\]
or there exist positive constants $C,c>0$ such that
\[
0<c\leq\Vert\nabla(v_{j})\Vert_{L^{2}(\mathbb{R}^{N})}\leq C<2,
\]
for $j$ large enough. With $v_{j}=\alpha_{j}e^{-\frac{\beta_{j}}{2}|x|^{2}}$,
as above, we get
\[
\int_{\mathbb{R}^{N}}|\nabla v_{j}|^{2}dx=\alpha_{j}^{2}\beta_{j}^{2}%
\int_{\mathbb{R}^{N}}e^{-\beta_{j}|x|^{2}}|x|^{2}dx=\alpha_{j}^{2}\beta
_{j}^{1-\frac{N}{2}}\int_{\mathbb{R}^{N}}e^{-|x|^{2}}|x|^{2}dx=\dfrac{N}{2}%
\pi^{N/2}\alpha_{j}^{2}\beta_{j}^{1-\frac{N}{2}},
\]
which allows us there exist constants $M,m>0$ such that
\[
m^{2}\leq\alpha_{j}^{2}\beta_{j}^{1-\frac{N}{2}}\leq M^{2},
\]
for $j$ large enough. Using the same arguments as the ones in Lemma
\ref{PropOfE}, up to a subsequence, we can assume that $\alpha_{j}%
\rightarrow\alpha\in\mathbb{R}$, and $\beta_{j}\rightarrow\beta\in(0,\infty)$.
Let us define $v(x)=\alpha e^{-\frac{\beta}{2}|x|^{2}}\in E_{SHUP}$, we have
$\nabla v_{j}\rightarrow\nabla v$ in $L^{2}(\mathbb{R}^{N})$. Hence, using the
triangle inequality again, we get
\[
\Vert\nabla(u-v)\Vert_{2}^{2}=\lim_{j\rightarrow\infty}\Vert\nabla
(u-v_{j})\Vert_{2}^{2}=\inf_{u^{\ast}\in E}\Vert\nabla(u-u^{\ast})\Vert
_{2}^{2},
\]
which proves our claim.

From the claim, we have
\[
\int_{\mathbb{R}^{N}}|\nabla(u-v)|^{2}dx\leq\dfrac{2}{C\left(  N\right)
}\delta_{1}(u)<1.
\]
Therefore, since $\Vert\nabla u\Vert_{2}^{2}=1$, $\Vert\nabla v\Vert_{2}%
^{2}\neq0$ (otherwise, $v$ would be constant, which is a contradiction). Thus,
we can define $\lambda=\left(  \int_{\mathbb{R}^{N}}|\nabla v|^{2}\right)
^{-1/2}>0$, and a function $w=\lambda v\in E_{SHUP}$ satisfying $\Vert\nabla
w\Vert_{2}^{2}=\lambda^{2}\Vert\nabla v\Vert_{2}^{2}=1.$ Hence,
\begin{align*}
\dfrac{C\left(  N\right)  }{2}\inf\{\Vert\nabla(u-u^{\ast})\Vert_{2}^{2}  &
:u^{\ast}\in E_{SHUP},\Vert\nabla u^{\ast}\Vert_{2}^{2}=\Vert\nabla u\Vert
_{2}^{2}\}\\
&  \leq\dfrac{C\left(  N\right)  }{2}\int_{\mathbb{R}^{N}}|\nabla
(u-w)|^{2}dx\\
&  =\dfrac{C\left(  N\right)  }{2}\left(  2-2\lambda\int_{\mathbb{R}^{N}%
}\nabla u\cdot\nabla vdx\right)  .
\end{align*}
Moreover,
\[
\int_{\mathbb{R}^{N}}|\nabla(u-v)|^{2}dx=1-2\int_{\mathbb{R}^{N}}\nabla
u\cdot\nabla vdx+\dfrac{1}{\lambda^{2}},
\]
which implies that
\[
1-2\int_{\mathbb{R}^{N}}\nabla u\cdot\nabla vdx+\dfrac{1}{\lambda^{2}}%
\leq\dfrac{1}{K\left(  N\right)  }\delta_{2}(u)<1,
\]
and
\[
0<\dfrac{1}{2\lambda^{2}}\leq\int_{\mathbb{R}^{N}}\nabla u\cdot\nabla vdx.
\]
Then, to prove \eqref{cdtStab}, it suffices to prove that
\begin{align*}
2\delta_{1}(u)  &  \geq C\left(  N\right)  \left(  1-2\int_{\mathbb{R}^{N}%
}\nabla u\cdot\nabla vdx+\dfrac{1}{\lambda^{2}}\right) \\
&  \geq\dfrac{C\left(  N\right)  }{2}\left(  2-2\lambda\int_{\mathbb{R}^{N}%
}\nabla u\cdot\nabla vdx\right)  ,
\end{align*}
which is equivalent to
\[
(2-\lambda)\int_{\mathbb{R}^{N}}\nabla u\cdot\nabla vdx\leq\dfrac{1}%
{\lambda^{2}}.
\]
To do this, by using H\"{o}lder's inequality, we get
\[
\lambda\int_{\mathbb{R}^{N}}\nabla u\cdot\nabla vdx=\int_{\mathbb{R}^{N}%
}\nabla u\cdot\nabla wdx\leq1.
\]
That is
\[
\dfrac{1}{2\lambda^{2}}\leq\int_{\mathbb{R}^{N}}\nabla u\cdot\nabla
vdx\leq\dfrac{1}{\lambda}.
\]
Therefore,
\[
(2-\lambda)\int_{\mathbb{R}^{N}}\nabla u\cdot\nabla vdx\leq(2-\lambda)\frac
{1}{\lambda}\leq\dfrac{1}{\lambda^{2}},
\]
with $0<\lambda\leq2$. Otherwise, if $\lambda>2$, then
\[
(2-\lambda)\int_{\mathbb{R}^{N}}\nabla u\cdot\nabla vdx<0<\dfrac{1}%
{\lambda^{2}}.
\]

\textbf{Case }$2$: $\delta_{1}(u)\geq\frac{C\left(  N\right)  }{2}$. This is
the easier case since we have
\begin{align*}
&  \inf\{\Vert\nabla(u-u^{\ast})\Vert_{2}^{2}:u^{\ast}\in E_{SHUP},\Vert\nabla
u^{\ast}\Vert_{2}^{2}=\Vert\nabla u\Vert_{2}^{2}=1\}\\
&  =\inf\{\Vert\nabla(u+u^{\ast})\Vert_{2}^{2}:u^{\ast}\in E_{SHUP}%
,\Vert\nabla u^{\ast}\Vert_{2}^{2}=\Vert\nabla u\Vert_{2}^{2}=1\}\\
&  \leq\dfrac{1}{2}\inf\{\Vert\nabla(u+u^{\ast})\Vert_{2}^{2}+\Vert
\nabla(u-u^{\ast})\Vert_{2}^{2}:u^{\ast}\in E_{SHUP},\Vert\nabla u^{\ast}%
\Vert_{2}^{2}=\Vert\nabla u\Vert_{2}^{2}=1\}\\
&  =\dfrac{1}{2}\inf\{2\left(  \Vert\nabla(u)\Vert_{2}^{2}+\Vert\nabla
(u^{\ast})\Vert_{2}^{2}\right)  :u^{\ast}\in E_{SHUP},\Vert\nabla u^{\ast
}\Vert_{2}^{2}=\Vert\nabla u\Vert_{2}^{2}=1\}\\
&  =2\leq\dfrac{4}{C\left(  N\right)  }\delta_{1}(u),
\end{align*}
which implies \eqref{cdtStab}.
\end{proof}

\section*{Acknowledgments}

The authors would like to sincerely thank the referee for their careful reading and insightful comments. In particular, the referee pointed out that the best constant \(C(N) = \inf_k C(N,k)\) in our original work is actually attained at \(k=1\), i.e., \(C(N) = C(N,1)\). This valuable observation significantly clarified the result and strengthened the paper. A. Do and G. Lu were partially supported by grants
from the Simons Foundation and a Simons Fellowship. N. Lam was partially
supported by an NSERC Discovery Grant.

%medskip

%\noindent\textbf{Conflict of interest.} All authors declare that they have no
%conflicts of interest.

%\medskip

%\noindent\textbf{Data Availibility.} Data sharing not applicable to this
%article as no datasets were generated or analysed during the current study.

\end{document}